\documentclass[12pt,a4paper,reqno]{amsart}

\oddsidemargin .5cm \evensidemargin .5cm \textwidth=15cm
\textheight=21truecm \unitlength=1cm
\parskip 3mm
\baselineskip 15pt

\usepackage{algorithm}
\usepackage{algpseudocode}
\usepackage{comment}
\usepackage{amsthm}
\usepackage{tikz}
\usepackage{fullpage}
\usepackage{float}
\usepackage{multirow}
\usepackage{amsmath}
\usepackage{amssymb}
\usepackage{hyperref}
\usepackage{amsthm}
\usepackage{amscd}
\usepackage{amsfonts}
\usepackage{array}
\usepackage{url}
\usepackage{mleftright} 
\usepackage{float}
\usepackage{multirow}
\usepackage{hyperref}
\usepackage{ulem}
\usepackage{soul}
\PassOptionsToPackage{lowtilde}{url}

\newtheorem{theorem}{Theorem}[section]

\newtheorem{corollary}[theorem]{Corollary}
\newtheorem{proposition}[theorem]{Proposition}

\newtheorem{example}[theorem]{Example}


\def\1{{\bf 1}}

\def\b{\mbox{\boldmath $b$}}

\def\m{\mbox{\boldmath $m$}}

\DeclareMathOperator{\dist}{dist}
\DeclareMathOperator{\dgr}{dgr}
\DeclareMathOperator{\rank}{rank}
\DeclareMathOperator{\spec}{sp}




\def\Aa{\mbox{\boldmath $A$}}
\def\BB{\mbox{\boldmath $B$}}
\def\I{\mbox{\boldmath $I$}}
\def\J{\mbox{\boldmath $J$}}
\def\Real{\mathbb R}

\usepackage{notation}

\date{}
\begin{document}
\title{The Shannon capacity of graph powers}


\author{Aida Abiad}
\address{Department of Mathematics and Computer Science, Eindhoven University of Technology, Eindhoven, The Netherlands; Department of Mathematics and Data Science of Vrije, Universiteit Brussel, Belgium}
\email{a.abiad.monge@tue.nl}

\author{Cristina Dalf\'o}
\address{Departament de Matem\`atica, Universitat de Lleida, Igualada (Barcelona), Catalonia}
\email{cristina.dalfo@udl.cat}

\author{Miquel \`Angel Fiol}
\address{Departament de Matem\`atiques, Universitat Politècnica de Catalunya, Barcelona, Catalonia; Barcelona Graduate School of Mathematics, Barcelona, Catalonia; Institut de Matem\`atiques de la UPC-BarcelonaTech (IMTech), Barcelona, Catalonia}
\email{miguel.angel.fiol@upc.edu}

\begin{abstract}
For a graph $G$, its $k$-th graph power $G^k$ is constructed by placing an edge between two vertices if they are within distance $k$. 
We consider the problem of deriving upper bounds on the Shannon capacity of graph powers by using spectral graph theory and linear optimization methods. First, we use the so-called ratio-type bound to provide an alternative and spectral proof of a result by Lov\'asz [\textit{IEEE Trans. Inform. Theory}  1979], which states that, for a regular graph, the Hoffman ratio bound on the independence number is also an upper bound on the Lov\'asz theta number and, hence, also on the Shannon capacity. In fact, we show that Lov\'asz' result holds in the more general context of graph powers. Secondly, we derive another bound on the Shannon capacity of graph powers, the so-called rank-type bound, which depends on a new family of polynomials that can be computed by running a simple algorithm. Lastly, we provide several computational experiments that demonstrate the sharpness of the two proposed algebraic bounds. As a byproduct, when these two new algebraic bounds are tight, they can be used to easily derive the exact values of the Lov\'asz theta number (which relies on solving an SDP) and the Shannon capacity (which is not known to be computable) of the corresponding graph power.\\

\noindent \textbf{Keywords:} Graph power, independence number,  Shannon capacity, Lov\'asz theta number,  polynomials, eigenvalues.\\
\noindent \textbf{Mathematics Subject Classifications:} 05C50, 05C69.
\end{abstract}

\maketitle

\section{Introduction}

In transmitting information, the vertices of a graph $G$ can represent the different symbols (forming a message) to be transmitted, where two symbols that can be confused in the transmission are joined by an edge. Then, $G$ is called the \textit{confusability graph}. The maximum number of symbols that can be transmitted without error is the independence number of $G$, usually denoted by $\alpha=\alpha(G)$. Let $C\subset V$ be a maximum independent vertex set, $|C|=\alpha$. Then, in the language of information theory, we say that the \textit{information rate} (assuming that all the symbols are transmitted with the same probability $p_i=1/\alpha$) is 
$$
\sum_{i\in C}p_i\log_2 \frac{1}{p_i}=\log_2 \alpha(G).
$$
Shannon proved that the information rate can be increased using larger strings instead of single symbols.
More precisely, using strings of length $k$, the \textit{$k$-th confusability graph} is the strong product of $G$ by itself $k$ times and is denoted by $G^{\boxtimes k}$.
Thus, by using strings of $k$ symbols, the information rate \textit{per symbol}
is 
$$
\frac{\log_2 \alpha(G^{\boxtimes k})}{k}=\log_2\sqrt[k]{\alpha(G^{\boxtimes k})}.
$$
This led Shannon~\cite{s56} to define the \textit{zero-error capacity} of a graph $G$ as
$$
\Theta(G):=\sup_{n}\alpha(G^{\boxtimes n})^{1/n}=\lim_{n\to\infty}\alpha(G^{\boxtimes n})^{1/n}.
$$
This parameter $\Theta(G)$ (known as \textit{Shannon capacity}) measures the zero-error capacity of a communication channel that uses symbols represented by the graph's vertices. 
Since its introduction by Shannon \cite{s56}, it has received much attention in the literature. However, although Shannon defined it in 1956, very little is known about it. In fact, the computational complexity of the Shannon capacity is unknown, and the same happens for its value in some small graphs: while Lov\'asz \cite{l79} showed that  $\Theta(C_5)=\sqrt{5}$, the problem of determining the value of $\Theta(C_7)$ remains wide open. Some limits on the independence number of cycles of odd length using the Shannon capacity were shown by Bohman \cite{b03,b05}. The Shannon capacity exhibits some bizarre behavior. For instance,  Alon \cite{a98} showed that there are graphs $G$ and $H$ such that the Shannon capacity of their disjoint union $G\cup H$ is (much) larger than
the sum of their Shannon capacities, disproving a conjecture of Shannon (1956). Moreover, Alon and Lubetzky \cite{al06} proved that ``the series of independence numbers in strong powers of a fixed graph can exhibit a complex structure, implying that the Shannon capacity of a graph cannot be approximated (up to a subpolynomial factor of the number of vertices) by any arbitrarily large (but fixed) prefix of the
series''.


%

Two upper bounds on the Shannon capacity $\Theta(G)$ are relevant for our work. 
The first is the Lov\'asz theta number (or theta function) $\vartheta(G)$, which can be computed as the solution to a semidefinite program dealing with arrangements of vectors associated with
$G$, and thus, can be computed in polynomial time. Introduced by Lov\'asz~\cite{l79}, the theta function was used to verify that $\Theta(C_5) = \vartheta(C_5) = \sqrt{5}$. The second upper bound on $\Theta(G)$, denoted as $R(\BB)$, was proposed by Haemers \cite{H1978,h79} and uses the rank of some particular matrices on a field $\mathbb{F}$ associated with the graph $G$. Haemers~\cite{h79} introduced this bound $R(\BB)$ to provide negative answers to three questions put forward by Lov\'asz in~\cite{l79}. 


In this paper, we consider the Shannon capacity of graph powers. For a positive integer $k$, the $k$-th \textit{power of a graph} $G =(V, E)$, denoted by $G^k$, is the graph with vertex set $V$ in which two distinct vertices are joined by an edge if there is a path in $G$ of length at most $k$ between them. 
Graph powers help design efficient algorithms for specific combinatorial optimization problems; see, for instance, 
Andreae and Bandelt \cite{2}, and Bender and Chekuri \cite{bc00}. In distributed computing, the $k$-th power of a graph $G$ represents the possible flow of information during $k$ rounds of communication in a distributed network of processors organized according to a graph $G$; see Linial \cite{l92}.


The investigation of the behaviour of various parameters of powers of a fixed graph leads to many fascinating problems, some of which are motivated by questions in information theory, communication complexity, geometry and Ramsey theory, see \cite{Apowers,powersthesis} for an overview. For some results of combinatorial nature involving graph powers, see, for example, Alon and Mohar \cite{AM2002}, Das and Guo \cite{dg13}, Hajiabolhassan \cite{13}, Kearney and Corneil \cite{15}, and Lau and Corneil \cite{16}, Devos, McDonald, and Scheide~\cite{Devos2013AveragePowers}), or Basavaraju, Chandran, Rajendraprasad, and Ramaswamy~\cite{bscrr14}. Recently, a particular case of the graph power has been investigated in the context of the Shannon capacity by de Boer, Buys, and Zuiddam \cite{BBZ2024} and by Buys, Polak, and Zuiddam\cite{BPZ2025}. Note that by graph power, some authors mean a graph in which two distinct vertices of $G^k$ are joined by an edge if there is a path in $G$ of length exactly $k$ between them, which is a variation of our definition.

In this paper we use the independence number of graph powers to provide new algebraic upper bounds on the Shannon capacity of $G^k$.
The distance generalization of the independence number, known as the $k$-\textit{independence number} of a graph $G$ and denoted by $\alpha_k(G)$, is the maximum number of vertices in $G$ at distance pairwise at least $k$, or alternatively, $\alpha_k(G)=\alpha(G^k)$. While the latter parameter relation holds, the spectrum of $G^k$ cannot be in general deduced from the spectrum of $G$, see for instance Das and Guo~\cite{dg13}, and Abiad, Coutinho,  Fiol, Nogueira, and Zeijlemaker \cite[Section 2]{acfnz21}. Our bounds on the Shannon capacity of the graph power $G^k$ are in terms of the eigenvalues of the original graph $G$. We derive them by using some recent eigenvalue bounds on the $k$-independence number of a graph. Indeed, since the distance generalization of the independence number, $\alpha_k(G)$, is computationally difficult to determine, lot of attention has been put on deriving bounds for it which can be efficiently computed, see e.g. \cite{AZ2024,FH1997,F1999,lw2021,Taoqiu2019SharpGraphs,AF2004}. One of such bounds for $\alpha_k(G)$ was derived by Abiad, Coutinho, and Fiol \cite{acf19}. It uses the eigenvalues of a graph $G$ and a polynomial of degree at most $k$.  
\begin{theorem}[Ratio-type bound \cite{acf19}]
\label{thm:acf19}
   Let $G=(V,E)$ be a regular graph with $n$ vertices and adjacency eigenvalues $\lambda_1 \geq \cdots \geq \lambda_n$. 
    Let $[2, n] = \{2, 3,\ldots, n\}$. Given a polynomial $p \in \mathbb{R}_k[x]$,  consider the parameters $W(p) = \max_{u\in V} {(p(A))_{uu}}$ and $\lambda(p) = \min_ {i\in[2,n]}{p(\lambda_i)}$, and assume $p(\lambda_1) > \lambda(p)$. Then, the $k$-independence number of a graph satisfies
    $$
\alpha_k(G) \leq n \frac{W(p) - \lambda(p)}{p(\lambda_1) - \lambda(p)}.
$$
\end{theorem}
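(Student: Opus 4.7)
The plan is to adapt the classical Hoffman ratio-bound argument to the polynomial matrix $B=p(A)$. Let $U\subseteq V$ be a $k$-independent set with $|U|=s=\alpha_k(G)$, and let $\chi_U\in\{0,1\}^n$ denote its indicator vector. The key geometric observation is that, since any two distinct $u,v\in U$ satisfy $d_G(u,v)>k$, there is no walk in $G$ of length at most $k$ joining them; equivalently, $(A^j)_{uv}=0$ for every $j\in\{0,1,\dots,k\}$. Because $\deg p\le k$, this forces $(p(A))_{uv}=0$, so the principal submatrix $B_U$ is diagonal with entries $B_{uu}\le W(p)$.

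Next, I would sandwich the quadratic form $\chi_U^{\top}B\chi_U$ from above and below. On one hand,
\[
\chi_U^{\top}B\chi_U=\sum_{u\in U}B_{uu}\le s\,W(p),
\]
since $B_U$ is diagonal. On the other hand, regularity of $G$ implies that $\1/\sqrt{n}$ is a unit eigenvector of $A$, and hence of $B$, with eigenvalue $p(\lambda_1)$. Expanding $\chi_U=\sum_{i=1}^n c_iv_i$ in an orthonormal eigenbasis $v_1=\1/\sqrt{n},v_2,\dots,v_n$ of $A$, the coefficient of $v_1$ is $c_1=\chi_U^{\top}\1/\sqrt{n}=s/\sqrt{n}$ and $\|\chi_U\|^2=s$, so the remaining spectral mass satisfies $\sum_{i\ge 2}c_i^2=s-s^2/n$. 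Using $p(\lambda_i)\ge\lambda(p)$ for $i\ge 2$, I obtain
\[
\chi_U^{\top}B\chi_U=\sum_{i=1}^n c_i^2\,p(\lambda_i)\ \ge\ \frac{s^2}{n}\,p(\lambda_1)+\Bigl(s-\frac{s^2}{n}\Bigr)\lambda(p).
\]

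Combining the two estimates and dividing by $s>0$ yields
\[
W(p)\ \ge\ \frac{s}{n}\,p(\lambda_1)+\Bigl(1-\frac{s}{n}\Bigr)\lambda(p),
\]
which, under the hypothesis $p(\lambda_1)>\lambda(p)$, rearranges to $s/n\le (W(p)-\lambda(p))/(p(\lambda_1)-\lambda(p))$, i.e.\ the claimed ratio bound.

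The only delicate step is the very first one: certifying that every off-diagonal entry of $p(A)$ restricted to $U\times U$ vanishes. This is exactly where the assumption $\deg p\le k$ meets the distance structure of the $k$-independent set, via the elementary walk interpretation of $(A^j)_{uv}$. Once that is in place, the remainder is a routine Hoffman-style spectral calculation that specializes to the classical Hoffman bound when $p(x)=x$ and $k=1$.
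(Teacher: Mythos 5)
Your proof is correct. The combinatorial key step --- that the principal submatrix of $p(A)$ indexed by a $k$-independent set $U$ is diagonal, because $(A^j)_{uv}=0$ for all $j\le k$ whenever $\dist(u,v)>k$ and $\deg p\le k$ --- is exactly the input used in the cited source \cite{acf19}, and you correctly read ``$k$-independent'' as pairwise distance \emph{greater} than $k$ (i.e., $\alpha_k(G)=\alpha(G^k)$), which is the convention under which the statement holds. Where you diverge is in the spectral half of the argument: the proof in \cite{acf19} (like that of Theorem \ref{th:Theta-minor-pol}, as the authors remark) goes through eigenvalue interlacing applied to the $2\times 2$ quotient matrix of $p(A)$ for the partition $\{U,V\setminus U\}$, whereas you bound the quadratic form of the indicator vector $\chi_U$ directly, splitting off the $\mathbf{1}/\sqrt{n}$ component (which is where regularity enters, giving the eigenvalue $p(\lambda_1)$) and bounding the orthogonal remainder below by $\lambda(p)$. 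The two routes are equivalent in strength here; your Rayleigh-quotient version is more self-contained (no interlacing theorem needed) and makes the role of the hypothesis $p(\lambda_1)>\lambda(p)$ transparent in the final rearrangement, while the interlacing formulation is the one that extends to the weighted/irregular refinements and chromatic analogues in the surrounding literature. Two minor points you could make explicit: the step $\sum_{i\ge 2}c_i^2\,p(\lambda_i)\ge\lambda(p)\sum_{i\ge 2}c_i^2$ relies on $\sum_{i\ge 2}c_i^2=s-s^2/n\ge 0$, which holds since $s\le n$; and the division by $s$ is licit because $\alpha_k(G)\ge 1$.
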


For $k=1$, the above bound gives the celebrated Hoffman \textit{ratio bound} on the independence number of a regular graph (unpublished; see, for instance, Haemers \cite{h95}):
\begin{equation}\label{eq:ratiobound}
\alpha(G) \leq n\frac{-\lambda_n }{\lambda_1 - \lambda_n}.
\end{equation}
Fiol \cite{f20} optimized the ratio-type bound for general $k$ using linear programming methods and the so-called minor polynomials. The optimal polynomials for the ratio-type bound were explicitly derived by Abiad, Coutinho, and Fiol \cite{acf19} for $k=2$, and by Kavi and Newman \cite{kn23} for $k=3$. The ratio-type bound, which will be the main character of the first part of this paper, has recently witnessed applications in coding theory, improving the state-of-the-art bounds on the maximal size of code for some metrics, see \cite{ANR2025,AKR2024,AAR2025}. Here, we investigate its application to information theory.

Fiol \cite[Section 4.1]{f20} showed that, for partially walk-regular graphs, the aforementioned ratio-type bound is in fact an upper bound on the Shannon capacity of $G^k$. Continuing this line of research, in the first part of this paper we investigate multiple consequences of this upper bound on the Shannon capacity of the graph power. Indeed, we use the ratio-bound to show a (non-trivial) generalization to graph powers of the result by Lov\'asz \cite{l79} that states that the Lov\'asz theta number upper bounds the ratio bound (non-trivial because the spectra of $G$ and $G^k$ are generally not related), see Theorem \ref{thm:ratiotypeboundsupperboundslovasztheta}. This is useful because the ratio-type bound can be computed efficiently by using a linear programming (LP) and the minor polynomials, and, when our bound is tight, as a byproduct,  we obtain the value of the Lov\'asz theta number (which is a more computationally expensive bound to compute since it depends on solving a semidefinite program) and of the Shannon capacity of the power graph, see Corollary \ref{coro:firstpart}. Moreover, our approach has the advantage of yielding closed formulas for the new eigenvalue bounds on the Shannon capacity and, thus, allowing asymptotic analysis. 



Furthermore, in the second part of this paper, we present another algebraic bound on the Shannon capacity of graph powers; the so-called rank-type bound. In order to derive and compute such bound we  introduce a new class of polynomials (the Shannon polynomials), which allow us to extend the well-known rank bound by Haemers \cite{h79} on the Shannon capacity, see Theorem \ref{th:Hamererankextension2} and Theorem \ref{thm:secondboundShannonpolys}. We also provide implementations of the new rank-type bound for $G^k$, formulating it both as a linear optimization problem (LP) and as an algorithm.

Finally, we run several computational experiments to illustrate the performance of the two newly proposed bounds on the Shannon capacity of graph powers: the ratio-type bound and the rank-type bound. Both bounds are tight for several instances of tested graphs. Notably, the ratio-type bound often performs as well as the celebrated Lov\'asz theta  bound (see Tables  	\ref{fig:alsoLovaszthetak=2}-\ref{table:cyclepowersranktypeandratiotypeboundsk=5}), but our spectral bound is significantly easier to compute than the  Lov\'asz theta bound. Indeed, the ratio-type eigenvalue bound (as well as the rank-type bound) uses an LP and is, for small graphs like the ones we tested,
significantly faster than solving the SDP needed to compute the the Lov\'asz theta number. In general, both the ratio-type and the rank-type proposed bounds seems to be incomparable.

The rest of this paper is structured as follows. In Section \ref{sec:prelim}, we introduce the necessary definitions and standardized notation. In Section \ref{bounds-Theta}, we start by illustrating how the ratio-type bound, which can be optimized using the minor polynomials, gives an upper bound on the Shannon capacity of graph powers, and we investigate multiple consequences of this result for small powers. Next, we show that this result provides an alternative proof of the fact that the ratio bound is also an upper bound on the Shannon capacity of a graph, a result that was originally shown by Lov\'asz \cite[Theorem 9]{l79} by using a different approach. In Section \ref{sec:optimizingboundsalphak}, we show an upper bound on the Shannon capacity of graph powers, the rank-type bound, which uses polynomials and the minimum rank of a certain matrix, and which extends the celebrated Haemers' rank bound \cite{h79}. To do so, we introduce a new family of polynomials (called Shannon polynomials), and we show that they can be computed both by solving a linear program and by running a simple algorithm. Finally, in the Appendix, we present several computational results for the two investigated algebraic upper bounds on the Shannon capacity of graph powers (the ratio-type bound and the rank-type bound), which seem to indicate that the bounds are incomparable also for general $k$.

\section{Preliminaries}
\label{sec:prelim}

Throughout this paper, $G=(V,E)$ denotes a (simple and connected) graph with $n=|V|$ vertices, $m=|E|$ edges, and adjacency matrix $\Aa=\Aa(G)$ with spectrum
$\spec G=\{\theta_0^{m_0},\theta_1^{m_1},\ldots,\theta_d^{m_d}\},$
where the different eigenvalues are in decreasing order, $\theta_0>\theta_1>\cdots>\theta_d$, and the superscripts stand for their multiplicities (since $G$ is supposed to be connected, $m_0=1$). We denote by $\mathbb{R}_k[x]$ the collection of polynomials in $x$ over $\mathbb{R}$ of degree at most $k \in \mathbb{N}_{\geq 0}$.

Let us now recall some known concepts. A \textit{closed walk} in a graph is a
walk from a vertex back to itself. 
A graph $G$ is called \textit{walk-regular} if the number of closed walks of any length from a vertex to itself does not depend on the choice of the vertex, a concept introduced by Godsil and McKay in \cite{gm80}.
Recall that the \textit{girth} of a graph $G$ is the length of a shortest cycle contained in $G$. Then, a natural generalization of a walk-regular graph is as follows: A graph $G$ is called \textit{$k$-partially walk-regular}, for some integer $k\ge 0$, if the number of closed walks of a given length $l\le k$, rooted at a vertex $v$, only depends on $l$. Thus, every (simple) graph is $k$-partially walk-regular for $k=0,1$, and every regular graph with girth $g$ is $k$-partially walk-regular with $k=2\lfloor (g-1)/2\rfloor$ (since all walks of length $\ell\le 2k$ are paths going forward and backward through some $\ell$ edges).

A graph $G$ is called \textit{distance-regular} if, for any two vertices $u$ and $v$ at distance $\dist(u,v)=l$, the number of vertices at distance $i$ from $u$ and at distance $j$ from $v$, denoted by $p_{ij}^l$, depends only upon $l$, $i$, and $j$. As another characterization, $G$ is distance-regular if the number of $l$-walks between two vertices $u,v$ only depends on their distance $\dist(u,v)$.
A graph $G$ is called \textit{$k$-partially distance-regular} if it is distance-regular up to distance $i\leq k$.
Then, every $k$-partially distance-regular graph is $2k$-partially walk-regular. Moreover, $G$ is $k$-partially walk-regular for any $k$ if and only if $G$ is walk-regular.  For example, it is well known that every distance-regular graph is walk-regular (but the converse does not hold).
In other words, if $G$ is a $k$-partially walk-regular graph, for any polynomial $p\in \Real_k[x]$, the diagonal of $p(\Aa)$ is constant with entries
$$
(p(\Aa))_{uu}=\frac{1}{n}\tr\big(p(\Aa) \big)=\frac{1}{n}\sum_{i=1}^n  p(\lambda_i)\quad \mbox{for all $u\in V$}.
$$

\section{Bounds using the minor polynomials}
\label{bounds-Theta}

\subsection{Ratio-type bound}

In this section, we assume that the graph $G$ is regular.
When, moreover, $G$ is $k$-partially walk-regular for some $k\ge 1$, an upper bound for the Shannon capacity of the graph power, $\Theta(G^k)$, was provided by Fiol \cite[Proposition 4.2]{f20}.
This bound uses the so-called minor polynomials, which are defined as follows.  
Let $G$ have spectrum $\spec G=\{\theta_0,\theta_1^{m_1},\ldots,\theta_d^{m_d}\}$ with $\theta_0>\theta_1>\cdots>\theta_d$. 
The \textit{$k$-minor polynomial} $f_k\in \Real_k[x]$ is the polynomial defined by $f_k(\theta_0)=1$ and $f_k(\theta_i)=x_i$, for $1\le i\le d$, where the vector $(x_1,x_2,\ldots,x_d)$ is a solution of the following linear programming:
\begin{equation}
\boxed{
 \begin{array}{rl}
 & \\
 {\tt minimize} & \displaystyle\sum_{i=0}^d m_i f(\theta_i)\\
 {\tt with\ constraints} & f(\theta_i)=x_i,\\
                        & f[\theta_0,\ldots,\theta_m]=0,\quad \mbox{for } m=k+1,\ldots,d\\
                       & x_i\ge 0, \quad \mbox{for } i=1,\ldots,d.\\
                        &
 \end{array}
}
 \label{LP-minor-f}
 \end{equation}
Here, $f[\theta_0,\ldots,\theta_m]$ denotes the $m$-th divided differences of Newton interpolation, recursively defined by  $f[\theta_i,\ldots,\theta_j]=\frac{f[\theta_{i+1},\ldots,\theta_j]-f[\theta_i,\ldots,\theta_{j-1}]}
{\theta_j-\theta_{i}}$, where $j>i$, starting with $f[\theta_i]=f_k(\theta_i)=x_i$, for $0\le i\le d$.
Fiol \cite{f20} showed that $f_k$ has degree $k$, with exactly $k$ zeros in the mesh $\theta_1,\ldots,\theta_d$. For some values of $k$, there exist explicit formulas for the corresponding $k$-minor polynomial:
\begin{description}
\item[\boldmath{$k=0$}]
The $0$-minor polynomial is a constant, $f_0(x)=1$.
\item[\boldmath{$k=1$}]
The $1$-minor polynomial is $f_1(x)=\frac{x-\theta_d}{\theta_0-\theta_d}$.
\item[\boldmath{$k=2$}] 
The $2$-minor polynomial has zeros at consecutive eigenvalues $\theta_{i}$ and $\theta_{i-1}$, where $\theta_i$ is the largest eigenvalue not greater than $-1$. Then, Abiad, Coutinho, and Fiol \cite{acf19} showed that the 2-minor polynomial is $$f_2(x)=\frac{(x-\theta_i)(x-\theta_{i-1})}{(\theta_0-\theta_{i})(\theta_0-\theta_{i-1})}.$$
\item[\boldmath{$k=3$}]
A $3$-minor polynomial was implicitly given by Kavi and Newman \cite[Theorem 11]{kn23} as follows:  With $d\ge 3$, let $\theta_s$ be the smallest  eigenvalue such that $\theta_s\ge-\frac{\theta_0^2+\delta\theta_d-\Delta}{\delta(\theta_d+1)}$ with $\Delta=\max_{u\in V}\{(\Aa^3)_{uu}\}$. Consider the polynomial $p_3(x)=x^3+bx^2+cx$ with $b=-(\theta_s+\theta_{s+1}+\theta_d)$ and $c=\theta_d\theta_s+\theta_d\theta_{s+1}+\theta_s\theta_{s+1}$. Then, a $3$-minor polynomial is $$f_3(x)=\frac{p_3(x)-p_3(\theta_d)}{p_3(\theta_0)-p_3(\theta_d)}=\frac{(x-\theta_s)(x-\theta_{s+1})(x-\theta_d)}{(\theta_0-\theta_s)(\theta_0-\theta_{s+1})(\theta_0-\theta_d)}.$$
\item[\boldmath{$k=d$}] 
The $d$-minor polynomial is $f_d(x)=\frac{1}{n}H(x)$, where $H(x)$ is the Hoffman polynomial with zeros at $\theta_i$, for $i\neq 0$, and $H(\theta_0)=n$. 
\end{description}
In general, the $k$-minor polynomial for a given graph is not unique, but always, if $k$ is odd, it has
zeros at $\theta_d$ and $(k-1)/2$ pairs of consecutive eigenvalues $(\theta_i,\theta_{i+1})$. Analogously, if $k$ is even, a $k$-minor polynomial has
zeros at $k/2$ pairs of consecutive eigenvalues.
\begin{figure}[t]
	\begin{center}
 \includegraphics[width=8cm]{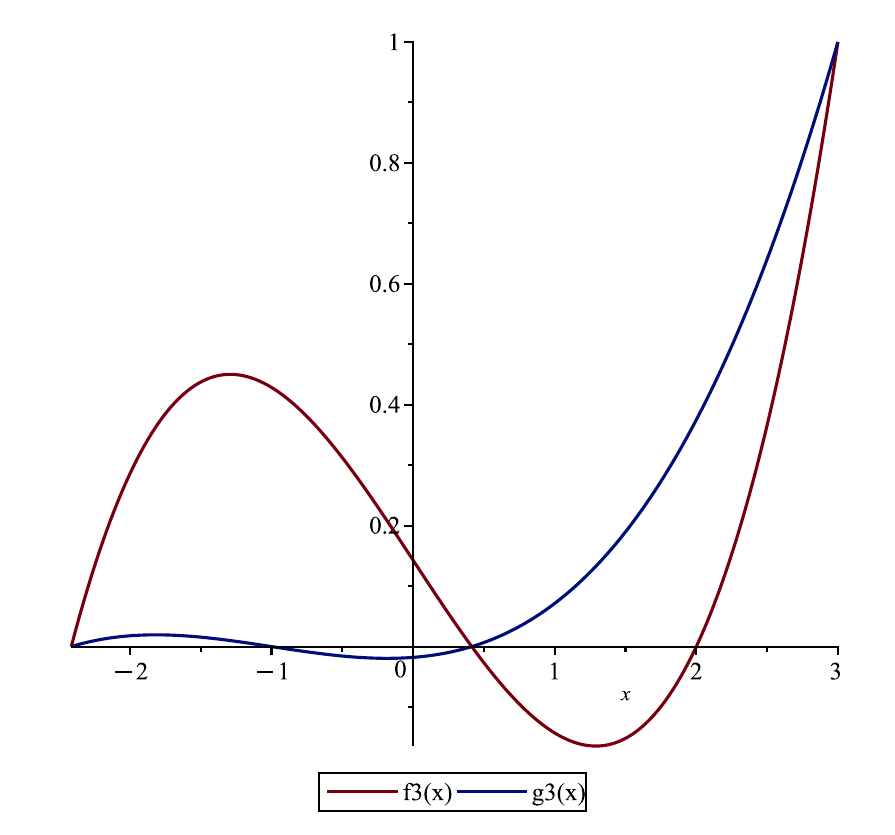}
	\end{center}
	\caption{Two $3$-minor polynomials for the Coxeter graph in Example \ref{ex:f3+g3}.}
	\label{fig:f3+g3}
\end{figure}
\begin{example}
\label{ex:f3+g3}
The Coxeter graph $C$ is a cubic distance-regular graph with $n=28$ vertices, diameter $d=4$, and spectrum $\spec C=\{3^1,2^8,(\sqrt{2}-1)^6,-1^7, (-\sqrt{2}-1)^6\}$ (see, for instance, Biggs \cite{biggs}). Then, the two following polynomials are $3$-minor polynomials (see Figure \ref{fig:f3+g3}):
\begin{align*}
f_3(x) &=\frac{1}{14}(x^3-5x+2),\\
g_3(x) &= \frac{1}{56}(x^3+3x^2+x-1).
\end{align*}
In addition to $\theta_d=\theta_4=-\sqrt{2}-1$, the polynomial $f_3(x)$ has consecutive zeros $\theta_s=\theta_1=2$ and $\theta_2=\sqrt{2}-1$, while $g_3(x)$ has consecutive zeros $\theta_s=\theta_2=\sqrt{2}-1$ (this is the value given by the condition 
in {\boldmath{$k=3$}} by Kavi and Newman \cite{kn23}) and $\theta_3=-1$.
Moreover, $\tr f_3(\Aa)=\tr g_3(\Aa)=4$.
\end{example}

 For $k$-partially walk-regular graphs, Fiol \cite{f20} observed that the ratio-type bound from Theorem \ref{thm:acf19} can be written in the following form. We still call the following bound ratio-type bound (but note that in this case the bound only applies for the case of partially walk-regular graphs).

\begin{theorem}[\cite{f20}]
\label{th:Theta-minor-pol} 
Let $G$ be a regular $k$-partially walk-regular graph with $n$ vertices, adjacency matrix $\Aa$, and spectrum $\spec G=\{\theta_{0}^{m_0}, \ldots, \theta_{d}^{m_d}\}$. Let $f_k\in \mathbb{R}_k[x]$ be a $k$-minor polynomial. Then, for every $k=0,\ldots, d-1$, the Shannon capacity of $G^k$ satisfies
	\begin{equation}\label{boundFiolminor}
	 \Theta(G^k)\leq \tr f_k(\Aa)=\sum_{i=0}^{d}m_if_k(\theta_i).
	\end{equation}
\end{theorem}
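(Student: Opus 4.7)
The plan is to establish the multiplicative bound $\alpha_k(G^{\boxtimes N}) \le (\tr f_k(\Aa))^N$ for every $N \ge 1$ and then take $N$-th roots and pass to the limit, using $\Theta(G^k) = \lim_{N\to\infty} \alpha_k(G^{\boxtimes N})^{1/N}$. This last identity follows from the commutation of powers and strong products: by the distance formula $d_{G^{\boxtimes N}}(\vu,\vv) = \max_i d_G(u_i,v_i)$, two $N$-tuples are at distance at most $k$ in $G^{\boxtimes N}$ iff they are within distance $k$ in every coordinate, so $(G^k)^{\boxtimes N} = (G^{\boxtimes N})^k$ and hence $\alpha((G^k)^{\boxtimes N}) = \alpha_k(G^{\boxtimes N})$.

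The key object is the matrix $M := f_k(\Aa)$ and its $N$-fold tensor power $M_N := M^{\otimes N}$, indexed by $V(G^{\boxtimes N})$. The LP defining $f_k$ enforces $f_k(\theta_0)=1$ and $f_k(\theta_i) = x_i \ge 0$ for $i\ge 1$, so $M$ is positive semidefinite and $\1$ lies in its eigenvalue-$1$ eigenspace. Since $G$ is $k$-partially walk-regular and $\deg f_k \le k$, every diagonal entry of $M$ equals $w := \tfrac{1}{n}\tr f_k(\Aa)$. All these properties tensorize: $M_N$ is PSD, $\1^{\otimes N}/\sqrt{n^N}$ is a unit eigenvector of eigenvalue $1$, and every diagonal entry of $M_N$ equals $w^N$. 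Moreover, $\deg f_k \le k$ implies $(f_k(\Aa))_{uv}=0$ whenever $d_G(u,v)>k$, so $(M_N)_{\vu\vv}$ vanishes as soon as some coordinate satisfies $d_G(u_i,v_i)>k$. For any $k$-independent set $S$ of $G^{\boxtimes N}$ and any pair of distinct $\vu,\vv \in S$, at least one coordinate must be far, so the principal submatrix $(M_N)_{S,S}$ is diagonal with every diagonal entry equal to $w^N$.

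The proof concludes by a double count of the quadratic form $\chi_S^\top M_N \chi_S$, where $\chi_S$ is the characteristic vector of $S$. The diagonal description yields $\chi_S^\top M_N \chi_S = w^N|S|$, while the PSD structure together with the top eigenvector $\1^{\otimes N}/\sqrt{n^N}$ gives the spectral lower bound $\chi_S^\top M_N\chi_S \ge (\chi_S^\top \1^{\otimes N})^2/n^N = |S|^2/n^N$. Combining the two, $|S|\le (nw)^N = (\tr f_k(\Aa))^N$, which is the desired multiplicative bound; taking $N$-th roots and letting $N\to\infty$ proves $\Theta(G^k)\le \tr f_k(\Aa)$.

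The main obstacle lies in verifying that $M_N$ genuinely inherits all four structural features (positive semidefiniteness, the eigenvalue-$1$ eigenvector $\1^{\otimes N}$, the constant diagonal $w^N$, and the vanishing of entries indexed by coordinate-wise-far pairs), and that they interact correctly with the strong-product distance formula to force $(M_N)_{S,S}$ to be diagonal. Once that structural step is in place, the quadratic-form double count is routine and the passage to the Shannon capacity is immediate.
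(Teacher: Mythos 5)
Your proof is correct. Every step checks out: the LP defining $f_k$ forces $f_k(\theta_0)=1$ and $f_k(\theta_i)\ge 0$, so $M=f_k(\Aa)$ is positive semidefinite with $\1$ in its eigenvalue-$1$ eigenspace (using regularity); $k$-partial walk-regularity gives the constant diagonal $w=\tfrac1n\tr f_k(\Aa)$; $\dgr f_k\le k$ kills the entries at distance greater than $k$; all four properties tensorize; the strong-product distance formula identifies $(G^k)^{\boxtimes N}$ with $(G^{\boxtimes N})^k$; and the two evaluations of $\chi_S^\top M^{\otimes N}\chi_S$ yield $|S|\le(\tr f_k(\Aa))^N$, whence the bound on $\Theta(G^k)$ after taking $N$-th roots. (Note $\tr f_k(\Aa)\ge m_0 f_k(\theta_0)=1>0$, so the division is harmless.)

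The paper itself does not reprove this theorem; it cites Fiol \cite{f20} and remarks that the proof there ``only uses eigenvalue interlacing as a main technique.'' Your argument is the same spectral mechanism in its Rayleigh-quotient incarnation: instead of interlacing the $2\times 2$ quotient matrix of $f_k(\Aa)^{\otimes N}$ with respect to the partition $\{S, V^N\setminus S\}$, you lower-bound the quadratic form by projecting onto the Perron direction $\1^{\otimes N}$ and use positive semidefiniteness to discard the remaining terms. The two routes are essentially interchangeable here; your version has the small advantage of making explicit exactly which properties of $f_k(\Aa)$ are needed and why they survive tensoring, which is the genuinely non-routine part of the argument (as you correctly flag). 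One further remark: the detour through $(G^k)^{\boxtimes N}=(G^{\boxtimes N})^k$ is not strictly needed, since non-adjacency of distinct $\vu,\vv$ in $(G^k)^{\boxtimes N}$ already means some coordinate satisfies $\dist_G(u_i,v_i)>k$, which is all the vanishing argument uses; but including it is harmless and correct.
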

\medskip

Since every regular graph is 1- and 2-partially walk-regular, we have the following consequence for small graph powers.
\begin{corollary}
\label{coro:Sha-Hoff}
Let $G$ be a $\delta$-regular graph on $n$ vertices, with spectrum $\spec G=\{\theta_{0}^{m_0},$ $ \theta_1^{m_1},\ldots, \theta_{d}^{m_d}\}$, where $\theta_0(=\delta)>\theta_1>\cdots > \theta_d$. Then, we have the following upper bound for the Shannon capacity of the power graphs $G^k$ for $k=1,2,3$.  
\begin{itemize}
    \item[$(i)$]
    If $H_1$ is the Hoffman ratio bound (Eq. \eqref{eq:ratiobound}), then 
    \begin{equation}
    \Theta(G) \leq H_1=n\frac{-\theta_d}{\delta-\theta_d}.
    \label{eq:Sha-Hoff1}
   \end{equation}
\item[$(ii)$]
If $\delta\ge 2$ and $\theta_i$ is the largest eigenvalue not greater than $-1$, then 
\begin{equation}
\Theta(G^2) \leq H_2=n\frac{\delta+\theta_i\theta_{i-1}}{(\delta-\theta_i)(\delta-\theta_{i-1})}.
\label{eq:Sha-G2}
\end{equation}
\item[$(iii)$]
If $G$ is at least $3$-partially walk-regular with $\delta\ge 3$, $\theta_s$ is the smallest  eigenvalue such that $\theta_s\ge-\frac{\delta^2+\delta\theta_d-n_t}{\delta(\theta_d+1)}$, with $n_t$ being the common number of triangles rooted at every vertex, then 
\begin{equation}
\Theta(G^3) \le H_3=
n\frac{2n_t-\delta(\theta_s+\theta_{s+1}+\theta_d)-\theta_s\theta_{s+1}\theta_d}{(\delta-\theta_s)(\delta-\theta_{s+1})(\delta-\theta_d)}.
\label{eq:Sha-G3}
\end{equation}
\end{itemize}
\end{corollary}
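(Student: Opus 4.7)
The plan is to apply Theorem \ref{th:Theta-minor-pol} directly, feeding into the bound $\Theta(G^k)\le\tr f_k(\Aa)$ the closed-form expressions for the $k$-minor polynomials $f_1,f_2,f_3$ recorded in the three displayed cases preceding Example \ref{ex:f3+g3}. Once this is done, the right-hand side is an explicit polynomial in $\Aa$ and its trace collapses to a closed formula in the eigenvalues because the $\delta$-regularity (and, in case $(iii)$, the $3$-partial walk-regularity) pins down the first few power sums of the spectrum.

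First I would verify applicability of Theorem \ref{th:Theta-minor-pol}: any simple $\delta$-regular graph is trivially $0$-, $1$-, and $2$-partially walk-regular, since the number of closed walks of length $\le 2$ through any vertex is $1$ and $\delta$, respectively. Hence parts $(i)$ and $(ii)$ require nothing beyond regularity, whereas $(iii)$ is stated under the extra hypothesis of $3$-partial walk-regularity. The only trace identities needed are
\begin{equation*}
\tr(\Aa^0)=n,\qquad \tr(\Aa)=0,\qquad \tr(\Aa^2)=n\delta,\qquad \tr(\Aa^3)=2n\,n_t,
\end{equation*}
the last one following from the fact that $(\Aa^3)_{vv}=2n_t$ is independent of $v$ under $3$-partial walk-regularity (each closed $3$-walk at $v$ traverses one of the $n_t$ triangles through $v$ in one of its two orientations).

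The three bounds then follow as a routine expansion. For $(i)$, one obtains $\tr f_1(\Aa)=(\tr\Aa-n\theta_d)/(\delta-\theta_d) = -n\theta_d/(\delta-\theta_d)$, which is $H_1$. For $(ii)$, expanding the numerator of $f_2(\Aa)$ gives
\begin{equation*}
\tr\bigl[(\Aa-\theta_i \I)(\Aa-\theta_{i-1}\I)\bigr] = \tr(\Aa^2) - (\theta_i+\theta_{i-1})\tr(\Aa) + \theta_i\theta_{i-1}\,n = n(\delta+\theta_i\theta_{i-1}),
\end{equation*}
and division by $(\delta-\theta_i)(\delta-\theta_{i-1})$ yields $H_2$. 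For $(iii)$, the analogous three-factor expansion of $(\Aa-\theta_s \I)(\Aa-\theta_{s+1}\I)(\Aa-\theta_d\I)$ produces
\begin{equation*}
\tr(\Aa^3) - (\theta_s+\theta_{s+1}+\theta_d)\tr(\Aa^2) + (\theta_s\theta_{s+1}+\theta_s\theta_d+\theta_{s+1}\theta_d)\tr(\Aa) - \theta_s\theta_{s+1}\theta_d\,n,
\end{equation*}
which simplifies, via the identities above, to $n\bigl[2n_t-\delta(\theta_s+\theta_{s+1}+\theta_d)-\theta_s\theta_{s+1}\theta_d\bigr]$; dividing by the announced denominator gives $H_3$.

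No real obstacle is expected, since the argument reduces to computing traces of polynomial expressions in $\Aa$. The one subtlety, which is a matter of citation rather than proof, lies in $(iii)$: one must invoke \cite[Theorem~11]{kn23} (recalled in the $k=3$ discussion preceding the corollary) to confirm that the eigenvalue $\theta_s$ selected by the stated inequality does yield a valid $3$-minor polynomial in the sense of \eqref{LP-minor-f}, so that Theorem \ref{th:Theta-minor-pol} applies with this particular $f_3$.
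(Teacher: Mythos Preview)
Your proposal is correct and follows essentially the same route as the paper: apply Theorem~\ref{th:Theta-minor-pol} with the explicit $k$-minor polynomials $f_1,f_2,f_3$ listed above, then evaluate $\tr f_k(\Aa)$ via the power-sum identities $\sum m_i=n$, $\sum m_i\theta_i=0$, $\sum m_i\theta_i^2=n\delta$, and (in case $(iii)$) $\sum m_i\theta_i^3=2n\,n_t$. The paper presents the $k=1,2$ computations in eigenvalue-sum form and leaves the $k=3$ expansion as ``computed similarly'' with a pointer to \cite{kn23}, which is exactly the citation you flag.
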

\begin{proof}
As mentioned above, $f_1(x)=\frac{x-\theta_d}{\theta_0-\theta_d}$ and  $f_2(x)=\frac{(x-\theta_i)(x-\theta_{i-1})}{(\theta_0-\theta_{i})(\theta_0-\theta_{i-1})}$. Then, the result follows from Theorem \ref{th:Theta-minor-pol} since, by using that 
$\sum_{i=0}^d m_i=n$, $\sum_{i=0}^dm_i\theta_i=0$, and 
$\sum_{i=0}^dm_i\theta_i^2=n\delta$, 
we obtain
\begin{align*}
\tr f_1(\Aa) &=\frac{1}{\delta-\theta_d}\sum_{i=0}^{d} m_i(\theta_i-\theta_d)=\frac{1}{\delta-\theta_d}\left(\sum_{i=0}^{d}m_i\theta_i-\theta_d \sum_{i=0}^{d}m_i\right)
= n\frac{-\theta_d}{\delta-\theta_d}=H. \\
 \tr f_2(\Aa)& =\frac{1}{(\delta-\theta_i)(\delta-\theta_{i-1})}\sum_{j=0}^{d} m_j(\theta_j-\theta_i)(\theta_j-\theta_{i-1})\\
  &=\frac{1}{(\delta-\theta_i)(\delta-\theta_{i-1})}\left(\sum_{j=0}^{d}m_j\theta_j^2-(\theta_i+\theta_{i-1})\sum_{j=0}^{d}m_j\theta_j+\theta_i\theta_{i-1}\sum_{j=0}^{d}m_j\right)\\
 & = n\frac{\delta+\theta_i\theta_{i-1}}{(\delta-\theta_i)(\delta-\theta_{i-1})}.
\end{align*}
Finally, if $G$ is a 3-partially walk-regular graph, then
$\Delta=\frac{1}{n}\sum_{i=0}^dm_i\theta_i^3=2n_t$, and the value of $\tr f_{i}(\Aa)$ is computed similarly
(see, again, Kavi and Newman \cite[Thm. 11]{kn23}).
\end{proof}

We should note that \eqref{eq:Sha-Hoff1} was also shown by Lov\'asz in \cite[Thms. 1 and 9]{l79}.

\begin{example} 
The cycle $C_5$ on $n=5$ vertices has maximum and minimum eigenvalues $\theta_0=2$ and $\theta_2=-(1+\sqrt{5})/2$. Then, \eqref{eq:Sha-Hoff1} gives $\Theta(C_5)\le \sqrt{5}$. This is precisely the inequality proved by Lov\'asz \cite{l79} by using his $\vartheta$ function to conclude that $\Theta(C_5)=\sqrt{5}$. 
Thus, in some cases, Corollary \ref{coro:Sha-Hoff} provides an alternative way of computing the Shannon capacity of a graph. We should also note that the proof of Theorem \ref{th:Theta-minor-pol}, which appears in Fiol \cite{f20}, only uses eigenvalue interlacing as a main technique.
\end{example}
Since it holds that $\alpha(G)\le\Theta(G)$, we also have the following corollary.
\begin{corollary}
\label{coro:Shannon=Hoffman}
Let $G$ be a regular graph with independence number $\alpha_i=\alpha(G)$ for some $i=1,2,3$.
If $\alpha_i$ equals the ratio bound  $H_i$ in \eqref{eq:Sha-Hoff1}-\eqref{eq:Sha-G3}, $\alpha_i=H_i$, then $\Theta(G^i)=H_i$.
\end{corollary}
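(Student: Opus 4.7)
The plan is to obtain the conclusion by a short sandwich argument, combining Corollary~\ref{coro:Sha-Hoff} with the elementary lower bound on the Shannon capacity provided by the independence number. First I would recall that for any graph $H$, the very definition
$$
\Theta(H)=\sup_{n}\alpha(H^{\boxtimes n})^{1/n}
$$
immediately implies $\Theta(H)\ge \alpha(H)$, because the single term with $n=1$ already equals $\alpha(H)$. Applying this observation to $H=G^i$ yields $\alpha_i=\alpha(G^i)\le \Theta(G^i)$, which is the lower inequality I need.

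Next I would invoke Corollary~\ref{coro:Sha-Hoff} to obtain the matching upper inequality $\Theta(G^i)\le H_i$. For $i=1$ and $i=2$ the only hypothesis required is that $G$ be regular, which is part of the assumption; for $i=3$ one needs the additional assumption that $G$ be at least $3$-partially walk-regular (in order to guarantee the common triangle count $n_t$ used in \eqref{eq:Sha-G3}), and this should either be assumed explicitly or noted in the statement. Putting the two inequalities together with the hypothesis $\alpha_i=H_i$ gives the chain
$$
H_i \;=\; \alpha_i \;\le\; \Theta(G^i) \;\le\; H_i,
$$
which forces $\Theta(G^i)=H_i$, as desired.

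I do not foresee any real obstacle: once Corollary~\ref{coro:Sha-Hoff} is in hand, the result is essentially automatic. The only point to be careful about is that the Shannon capacity is known to dominate the independence number by definition, and that each of the three ratio-type bounds $H_1$, $H_2$, $H_3$ applies to $G^i$ under the appropriate (partial) walk-regularity assumption; aside from that bookkeeping, no further argument is needed.
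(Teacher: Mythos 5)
Your proof is correct and matches the paper's (implicit) argument exactly: the paper derives this corollary from the observation $\alpha(G^i)\le\Theta(G^i)$ combined with the upper bound $\Theta(G^i)\le H_i$ of Corollary~\ref{coro:Sha-Hoff}, which is precisely your sandwich $H_i=\alpha_i\le\Theta(G^i)\le H_i$. Your side remark that the case $i=3$ additionally requires $G$ to be $3$-partially walk-regular is a fair observation about the statement's hypotheses, but it does not change the argument.
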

\begin{figure}[t]
	\begin{center}
 \includegraphics[width=8cm]{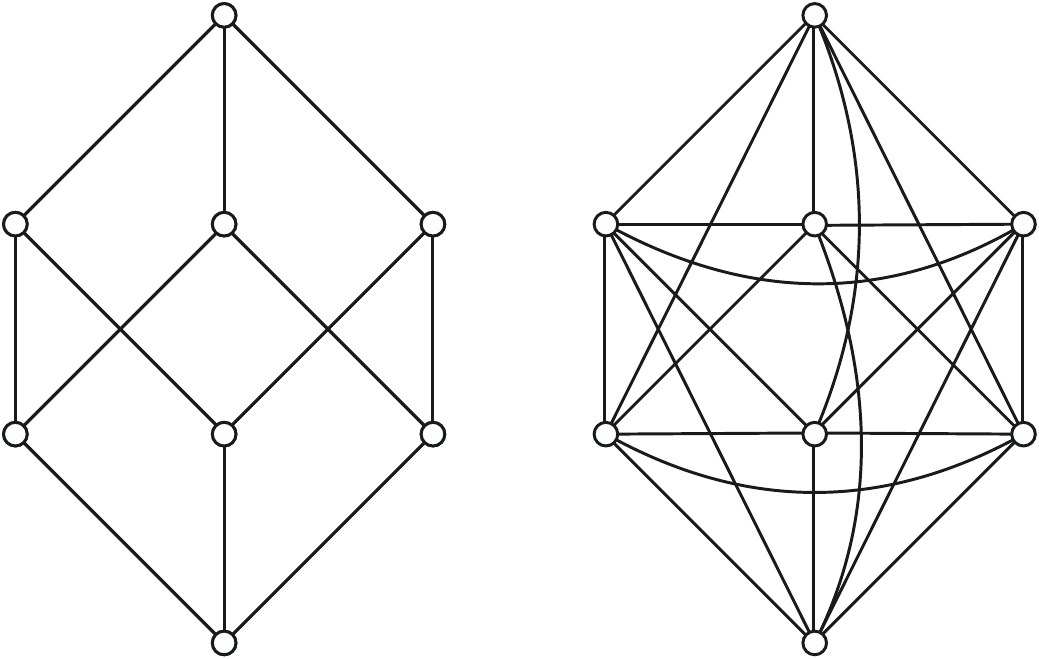}
	\end{center}
	\vskip-.25cm
	\caption{The cube $G=Q_3$ and its 2-power graph $G^{2}=Q_3^{2}$, which is a $(8,6;4,6)$-strongly regular graph  with spectrum $\spec Q_3^{2}=\{6,0^4,-2^3\}$.}
	\label{fig:cub}
\end{figure}
Let us see one example of each case.
\begin{example}
\label{ex:Qd}
Let $G=Q_d$ be the $d$-cube graph. Then, its $(d-1)$-power graph $G^{d-1}=Q_d^{d-1}$ has maximum and minimum eigenvalues $\theta_0=2^d-2$ and $\theta_d=-2$. Then, the Hoffman bound is $H_1=2$, which coincides with its independence number (since $Q_d$ is a $2$-antipodal distance-regular graph). Then, we conclude that $\Theta(Q_d^{d-1})=2$. See the example of $G=Q_3$ and $G^{2}=Q_3^{2}$ in Figure \ref{fig:cub}.
\end{example}

\begin{example}
\label{ex:coxeter2}
Considering again the Coxeter graph $C$  with $n=28$ vertices, degree $\delta=3$, and spectrum
$\spec C=\{3,2^8,(-1+\sqrt{2})^6,-1^7,(-1-\sqrt{2})^6\}$, we have the following: 
\begin{itemize}
\item
The largest eigenvalue not greater than $-1$ is $\theta_{3}=-1$.
 Thus, using this value together with $\theta_{4}=-1-\sqrt{2}$ in  \eqref{eq:Sha-G2}, we get that $H_2=7$. Thus, since it can be checked that $\alpha_2=7$, we have that $\Theta(C^2)=7$.
 \item 
 Since the number of triangles rooted at every vertex is $n_t=0$, the smallest eigenvalue at least $-\frac{\delta-\theta_4}{\theta_4+1}=\sqrt{2}-1$ is $\theta_2=\sqrt{2}-1$. Then, using this value together with $\theta_3=-1$ in  \eqref{eq:Sha-G3}, we have that $H_3=4$ (that is, the value of $\tr g_3(\Aa)$ in Example \ref{ex:f3+g3}).
 Since it can be checked that $\alpha_3=4$, we get
 $\Theta(G^3)=4$.
\end{itemize}
\end{example}

\begin{table}[!ht]
	\begin{center}
		\begin{tabular}{|c|ccccc|cc|c|}
			\hline
			$k$ & $x_4$  & $x_3$ & $x_2$ & $x_1$ & $x_0$ & $\tr f_k$ & $\alpha_k$ & $\Theta(G^k)$\\
			\hline\hline
			$1$ & 0 & 0.2612 & 0.5224 & 0.8153 & 1 & 12.4852 & 12 & 12.4852 \\
			\hline
			$2$ & 0.3867 & 0 & 0 & 0.4599 & 1 & 7 & 7 & 7 \\
			\hline
			$3$  & 0  & 0 & 0 & 0.375 & 1 & 4 & 4 & 4 \\
			\hline
			$4$  & 0 & 0 & 0  & 0 & 1 & 1 & 1 & 1 \\
			\hline
		\end{tabular}
	\end{center}
	\caption{Values $x_i=f_k(\theta_i)$ of the $k$-minor polynomials of the Coxeter graph $C$, with the ratio bounds $\tr f_k$, $k$-independence numbers $\alpha_k$, and Shannon capacities $\Theta(G^k)$ for $k=1,2,3,4$.}
	\label{table2}
\end{table}

	\begin{figure}[t]
		\begin{center}
			\includegraphics[width=8cm]{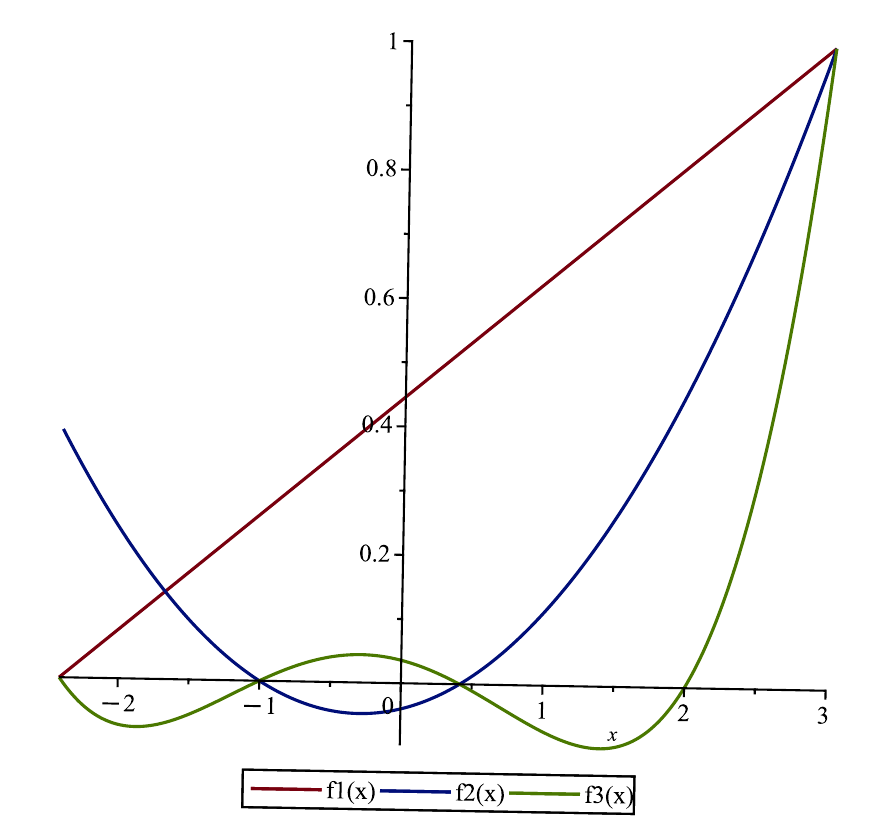}
			\caption{The $\{1,2,4\}$-minor polynomials of the Coxeter graph.}
			\label{fig1}
		\end{center}
	\end{figure}

In fact, Example \ref{ex:Qd} is a particular case of a more general phenomenon, which we show in the following result. First, recall that a graph $G=(V,E)$ of diameter $d$ is called \textit{antipodal} if there exists a partition of
the vertex set into classes with the property that two distinct
vertices are in the same class if and only if they are at distance $d$ (see, for instance, Godsil \cite{g93}.) If all the classes have the same cardinality, say $r$, we say that $G$ is an $r$-\textit{antipodal graph}. Moreover, 
a \textit{strongly regular} graph with parameters $(n, k; a, c)$ is a regular graph $G = (V, E)$ with $n$ vertices and degree $k$ such that for some given integers $a,c\geq0$,
every two adjacent vertices have $a$ common neighbors, and
every two non-adjacent vertices have $c$ common neighbors.
\begin{proposition}
Let $G$ be an $r$-antipodal distance-regular graph with $n$ vertices and diameter $d$. 
Then, the power graph $G^{d-1}$ is an $(n,k;a,c)$-strongly regular graph with parameters $k=c=n-r$ and $a=n-2r$, and it holds that the Shannon capacity is $\Theta(G^{d-1})=r$.
\end{proposition}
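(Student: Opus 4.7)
My plan is to analyze the structure of $G^{d-1}$ using the antipodal partition, derive its strongly regular parameters by straightforward counting, read off its spectrum from these parameters, and finally combine the trivial lower bound $\Theta \geq \alpha$ with the Hoffman bound from Corollary \ref{coro:Sha-Hoff}(i) applied to $G^{d-1}$ itself.

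First I would observe that, since $G$ is $r$-antipodal of diameter $d$, the relation ``being at distance $d$ in $G$'' partitions $V$ into classes $A(v)$ of size $r$, and $\dist_G(u,v)=d$ iff $A(u)=A(v)$ (with $u \neq v$). Consequently, in $G^{d-1}$ the neighbors of $v$ are exactly $V \setminus A(v)$. This immediately gives the degree $k = n - r$. For the parameter $a$, two adjacent vertices $u,v$ in $G^{d-1}$ satisfy $A(u)\neq A(v)$, so their common neighborhood is $V \setminus (A(u)\cup A(v))$, of cardinality $n - 2r$. For the parameter $c$, two non-adjacent vertices $u,v$ satisfy $A(u)=A(v)$, so their common neighborhood is $V \setminus A(u)$ minus $\{u,v\}$, of cardinality $n-r$. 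Thus $G^{d-1}$ is an $(n,n-r;n-2r,n-r)$-strongly regular graph.

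Next, I would compute the spectrum of $G^{d-1}$ from these parameters. The non-principal eigenvalues of a strongly regular graph satisfy the quadratic $x^2 - (a-c)x - (k-c)=0$, which here becomes $x^2 + rx = 0$, so the eigenvalues are $\theta_0 = n-r$, $0$, and $\theta_{\min} = -r$. Applying the Hoffman ratio bound \eqref{eq:Sha-Hoff1} of Corollary \ref{coro:Sha-Hoff}(i) directly to the regular graph $G^{d-1}$ yields
\[
\Theta(G^{d-1}) \leq n \cdot \frac{-\theta_{\min}}{\theta_0 - \theta_{\min}} = n \cdot \frac{r}{(n-r)+r} = r.
\]

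Finally I would establish the matching lower bound. Any independent set in $G^{d-1}$ is a set of vertices pairwise at distance exactly $d$ in $G$, hence contained in a single antipodal class, so $\alpha(G^{d-1}) \leq r$; conversely each class of size $r$ is independent in $G^{d-1}$, so $\alpha(G^{d-1}) = r$. Since $\alpha \leq \Theta$, we obtain $\Theta(G^{d-1}) \geq r$, and the two inequalities give equality. (Equivalently, one can invoke Corollary \ref{coro:Shannon=Hoffman} with $i=1$ applied to $G^{d-1}$.) I do not foresee a major obstacle here; the only point requiring a little care is the verification that in an $r$-antipodal distance-regular graph the antipodal classes really do have size exactly $r$ and are mutually at distance $d$, but this is built into the definition.
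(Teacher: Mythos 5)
Your proof is correct and follows essentially the same route as the paper: identify $G^{d-1}$ as the complete multipartite graph on the antipodal classes, obtain the eigenvalues $n-r$, $0$, $-r$, apply the Hoffman ratio bound to get $\Theta(G^{d-1})\le r$, and match it with $\alpha(G^{d-1})=r$. The only cosmetic difference is that you count the strongly regular parameters directly and deduce the spectrum from them, whereas the paper computes the spectrum first via the Kronecker product $\J_r\otimes\Aa_{n/r}$ and then reads off the parameters.
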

\begin{proof}
We know that every set of vertices at a distance $d$ from each other has cardinality $r$. Then, the adjacency matrix $\Aa$ of $G^{d-1}$ is equal to the Kronecker product $\J_r\otimes\Aa_{n/r}$, where $\J_r$ is the all-1 matrix of dimension $r$, and $\Aa_{n/r}$ is the adjacency matrix of the complete graph $K_{n/r}$. 
Since $\spec \J=\{r,0^{r-1}\}$ and $\spec \Aa_{n/r}=\{n/r-1,-1^{n/r-1}\}$, we have 
$$
\spec \Aa=\{n-r,0^{n-\frac{n}{r}}, -r^{\frac{n}{r}-1}\},
$$ 
that is, the product of both spectra. It is well known that a (connected) regular graph with three distinct eigenvalues $\theta_0>\theta_1>\theta_2$ is an $(n,k;a,c)$-strongly regular graph with parameters satisfying $k=\theta_0$, $c-k=\theta_1\theta_2$, and $a-c=\theta_1+\theta_2$ (see, for instance, Godsil \cite{g93}). In our case, with $\theta_0=n-r$, $\theta_1=0$, and $\theta_2=-r$, we get $k=n-r$, $c=n-r$, and $a-(n-r)=-r$, then $a=n-2r$, as claimed.
Moreover, the Hoffman bound for $G^{d-1}$ is
$$
H_1=n\frac{r}{(n-r)+r}=r=\alpha(G^{d-1}),
$$
and, by Corollary \ref{coro:Shannon=Hoffman}, we have $\Theta(G^{d-1})=r$.
\end{proof}

\subsection{Lov\'asz theta number}

As mentioned in the introduction, an upper bound for the Shannon capacity of a graph $G$ is given by the Lov\'asz theta number, denoted by $\vartheta(G)$, which can be computed using semidefinite programming (SDP) methods. Two possible definitions of this parameter are as follows.
Let $G=(V,E)$ be a graph on $n$ vertices.
Let $\Aa=(a_{uv})$ range over all $n\times n$ symmetric matrices such that $a_{uv}= 1$ when $u=v$ or $uv\not\in E$. Let $\rho(\Aa)$ be the spectral radius of $\Aa$. Then, the \textit{Lov\'asz theta number} of $G$ is
\begin{equation}
\label{theta1}
\vartheta(G)=\min_{\textit{\textbf{A}}} \rho(\Aa). 
\end{equation}
Alternatively, the dual method to this is 
\begin{equation}
\vartheta(G)=\max_{\textit{\textbf{B}}} \tr(\BB\J),
\label{theta2}
\end{equation}
where $\BB$ ranges over all $n\times n$ symmetric positive semidefinite matrices such that $b_{uv}=0$ for every $uv\in E$ and $tr \BB=1$,
and $\J$ is the all-1 matrix
(see Lov\'asz \cite{l79}).

Thus, in the case of power graphs, we have
 $$
 \alpha_k(G)=\alpha(G^k)\le \Theta(G^k)\le \vartheta(G^k).
 $$


In fact, as it happened with the Shannon capacity of a graph, for $k$-partially walk-regular graphs, the ratio-type upper bound from Theorem \ref{th:Theta-minor-pol} also upper bounds the Lov\'asz theta number of the power graph:
\begin{theorem}\label{thm:ratiotypeboundsupperboundslovasztheta}
Let $G$ be a $k$-partially walk-regular graph as in Theorem \ref{th:Theta-minor-pol}. Let $f_k$ be a minor polynomial of $G$ for a given $k=1,\ldots, d$. Then,
\begin{equation}
\label{thetaLovasz-fk}
\vartheta(G^k)\le \tr f_k(\Aa)=\sum_{i=0}^{d}m_if_k(\theta_i).
\end{equation}
\end{theorem}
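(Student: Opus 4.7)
My plan is to prove the inequality via the dual SDP formulation \eqref{theta2} of the Lov\'asz theta number. Fix an optimal symmetric PSD matrix $\BB$ for the graph $G^k$, so $\tr \BB = 1$, $\BB_{uv}=0$ whenever $uv\in E(G^k)$, and $\vartheta(G^k)=\tr(\BB\J)$. The first step is to observe that, away from the diagonal, the supports of $\BB$ and $f_k(\Aa)$ are disjoint: since $f_k\in\Real_k[x]$, the entry $(f_k(\Aa))_{uv}$ is a linear combination of numbers of walks of length at most $k$ between $u$ and $v$, and so vanishes whenever $\dist(u,v)>k$; on the other hand, $\BB_{uv}$ vanishes whenever $1\le\dist(u,v)\le k$, since these are exactly the edges of $G^k$. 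Consequently,
\[
\tr\bigl(\BB\, f_k(\Aa)\bigr) \;=\; \sum_{u\in V} \BB_{uu}\, (f_k(\Aa))_{uu},
\]
and, by $k$-partial walk-regularity of $G$, the diagonal entry $(f_k(\Aa))_{uu}=(1/n)\tr f_k(\Aa)$ is constant. Writing $T=\tr f_k(\Aa)=\sum_{i=0}^d m_if_k(\theta_i)$, this yields $\tr(\BB\, f_k(\Aa))=(T/n)\tr\BB=T/n$.

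The second step is to exhibit the semidefinite relation $f_k(\Aa)\succeq (1/n)\J$. By the spectral decomposition of $\Aa$, one has $f_k(\Aa)=\sum_{i=0}^d f_k(\theta_i)\,\EE_i$, where $\EE_i$ denotes the orthogonal projector onto the $\theta_i$-eigenspace. Since $G$ is connected and regular, $\EE_0=(1/n)\J$, and the normalization $f_k(\theta_0)=1$ together with the LP constraints $f_k(\theta_i)=x_i\ge 0$ for $i\ge 1$ in \eqref{LP-minor-f} gives
\[
f_k(\Aa) - \tfrac{1}{n}\J \;=\; \sum_{i=1}^d f_k(\theta_i)\,\EE_i \;\succeq\; 0.
\]

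To conclude, both $\BB$ and $f_k(\Aa)-(1/n)\J$ are PSD, so their trace inner product is nonnegative, and
\[
\frac{T}{n} \;=\; \tr\bigl(\BB\, f_k(\Aa)\bigr) \;\ge\; \tfrac{1}{n}\tr(\BB\J) \;=\; \frac{\vartheta(G^k)}{n},
\]
which rearranges to \eqref{thetaLovasz-fk}. I expect the only conceptually subtle point to be the disjoint-support identity of the first step: it hinges on the fact that a polynomial of degree $k$ in $\Aa$ cannot link vertices beyond distance $k$ in $G$, which is precisely where the SDP for $G^k$ forces $\BB$ to vanish. Everything else is a direct repackaging of the LP feasibility conditions defining the minor polynomial, combined with the standard fact $\tr(XY)\ge 0$ for PSD matrices $X,Y$.
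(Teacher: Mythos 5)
Your proof is correct, but it goes through the dual formulation \eqref{theta2} rather than the primal one \eqref{theta1} used in the paper. The paper's proof constructs an explicit feasible matrix for the minimization in \eqref{theta1}, namely $\Aa_k=\J-nf_k(\Aa)+\Psi(f_k)\I$ with $\Psi(f_k)=\tr f_k(\Aa)$, checks that its entries equal $1$ on the diagonal and on non-edges of $G^k$ (this uses exactly your two observations: the constant diagonal of $f_k(\Aa)$ from $k$-partial walk-regularity, and the vanishing of $(f_k(\Aa))_{uv}$ for $\dist(u,v)>k$ from $\dgr f_k\le k$), and then reads off that its largest eigenvalue is $\Psi(f_k)$ because $f_k(\theta_0)=1$ and $f_k(\theta_i)\ge 0$. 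You instead take an arbitrary dual-feasible $\BB$, use the disjoint off-diagonal supports of $\BB$ and $f_k(\Aa)$ to compute $\tr(\BB f_k(\Aa))=\tr f_k(\Aa)/n$, and combine this with $f_k(\Aa)\succeq \J/n$ and $\tr(XY)\ge 0$ for PSD $X,Y$. The two arguments use the same three feasibility facts about the minor polynomial and are of comparable length; yours has the minor advantage of never needing to discuss the spectrum of the constructed matrix (in particular it sidesteps the distinction between largest eigenvalue and spectral radius, which the paper glosses over when it asserts $\rho(\Aa_k)=\Psi(f_k)$ without checking that the eigenvalues $\Psi(f_k)-nf_k(\theta_i)$ do not fall below $-\Psi(f_k)$), while the paper's has the advantage of exhibiting a concrete certificate matrix.
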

\begin{proof}
Consider the matrix $\Aa_k= \J-nf_k(\Aa)+\Psi(f_k)\I$, where $\Psi(f_k)=\tr f_k(\Aa)$. Note that $\Aa_k$, as a matrix associated with $G^k$,  
is symmetric with entries $(\Aa)_{uv}=1$ when $u=v$ or $uv\not\in E$ (that is, the condition preceding \eqref{theta1}). 
Moreover, it has spectral radius $\rho(\Aa_k)=n-nf_k(\theta_0)+\Psi(f_k)=\Psi(f_k)$ since $f_k(\theta_0)=1$. Thus, the result follows.
\end{proof}

 Taking $k=1$ in Theorem \ref{thm:ratiotypeboundsupperboundslovasztheta}, we obtain an alternative spectral proof of the following known result (see \cite[Theorem 9]{l79}, where Lov\'asz credited Hoffman for this result):

\begin{corollary}
 Let $G$ be a $\delta$-regular graph on $n$ vertices with minimum eigenvalue $\theta_d$. Then, its Lov\'asz theta number satisfies
 \begin{equation}
 \vartheta(G)\le n\frac{-\theta_d}{\delta-\theta_d}.
 \label{Lovasz-Hoffman}
 \end{equation}
\end{corollary}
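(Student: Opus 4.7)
The plan is to apply Theorem \ref{thm:ratiotypeboundsupperboundslovasztheta} with $k=1$, using that every (simple) regular graph is automatically $1$-partially walk-regular (indeed, $1$-partial walk-regularity only requires that the number of closed walks of length $0$ and $1$ at each vertex are constant, which holds trivially). Hence the hypotheses of Theorem \ref{thm:ratiotypeboundsupperboundslovasztheta} are met, and we are entitled to conclude
$$
\vartheta(G)=\vartheta(G^1)\le \tr f_1(\Aa)=\sum_{i=0}^{d}m_if_1(\theta_i).
$$

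Next I would substitute the explicit form of the $1$-minor polynomial recalled in Section \ref{bounds-Theta}, namely $f_1(x)=(x-\theta_d)/(\theta_0-\theta_d)$, and evaluate the right-hand side. Using the standard spectral identities $\sum_{i=0}^d m_i=n$ and $\sum_{i=0}^d m_i\theta_i=\tr \Aa=0$ (since $G$ has no loops), I obtain
$$
\tr f_1(\Aa)=\frac{1}{\theta_0-\theta_d}\left(\sum_{i=0}^d m_i\theta_i-\theta_d\sum_{i=0}^d m_i\right)=n\frac{-\theta_d}{\delta-\theta_d},
$$
where I have used $\theta_0=\delta$ by regularity. Combining this with the bound displayed above yields the desired inequality \eqref{Lovasz-Hoffman}.

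There is essentially no obstacle here, as the corollary is a direct specialization of Theorem \ref{thm:ratiotypeboundsupperboundslovasztheta}; in fact the same computation already appears in the proof of Corollary \ref{coro:Sha-Hoff}(i). The only conceptual point worth emphasizing is that the $1$-partially walk-regular hypothesis of Theorem \ref{thm:ratiotypeboundsupperboundslovasztheta} degenerates to mere regularity when $k=1$, so that no additional combinatorial regularity assumption is required.
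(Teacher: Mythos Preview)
Your proof is correct and follows exactly the approach of the paper: the corollary is obtained by specializing Theorem \ref{thm:ratiotypeboundsupperboundslovasztheta} to $k=1$, and the evaluation of $\tr f_1(\Aa)$ is precisely the computation already carried out in the proof of Corollary \ref{coro:Sha-Hoff}$(i)$.
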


The following example illustrates that equality in \eqref{Lovasz-Hoffman} holds for several graph classes.
\begin{example}\cite[Corollary 5]{l79}
    The cycle $C_n$ is a $2$-regular graph with maximum eigenvalue $\theta_0=2$ and minimum eigenvalue $\theta_d=-2$ if $n=2d$, and $\theta_d=-2\cos(\pi/n)$ if $n=2d+1$.
    Then, computing the Hoffman bound, we get
    $$
    \vartheta(C_n)=\left\{
    \begin{array}{ll} 
    2 & \mbox{if $n$ is even,}\\
    \frac{n\cos(\pi/n)}{1+\cos(\pi/n)} & \mbox{if $n$ is odd.}
    \end{array}
    \right.
    $$
\end{example}

Furthermore, using Theorem \ref{th:Theta-minor-pol} and the above results, we obtain the following corollary.

\begin{corollary}\label{coro:firstpart}
If $G$ is a $k$-partially walk-regular graph, then the following inequalities hold
\begin{equation}
\label{all-ineq}
\alpha_k(G)=\alpha(G^k)\le \Theta(G^k)\le \vartheta(G^k)\le \tr f_k(\Aa).
\end{equation}    
\end{corollary}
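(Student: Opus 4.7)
The plan is to observe that the statement is a concatenation of four inequalities, three of which are either definitional or classical, and the fourth of which is precisely the content of Theorem \ref{thm:ratiotypeboundsupperboundslovasztheta} just established. So the corollary will be proved by simply assembling these pieces in order, with minimal additional work.

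First I would dispatch the equality $\alpha_k(G)=\alpha(G^k)$: this is immediate from the definition of the $k$-independence number given in the introduction, since a set of vertices pairwise at distance at least $k$ in $G$ is precisely an independent set in $G^k$ (and no $k$-partial walk-regularity is needed for this step). Next, the inequality $\alpha(G^k)\le \Theta(G^k)$ is a direct consequence of Shannon's definition
\[
\Theta(H)=\sup_{n}\alpha(H^{\boxtimes n})^{1/n}\ge \alpha(H),
\]
applied to $H=G^k$ with $n=1$. These two steps require no hypotheses on $G$.

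The third inequality $\Theta(G^k)\le \vartheta(G^k)$ is the classical theorem of Lov\'asz \cite{l79} that the theta function upper bounds the Shannon capacity of any graph; I would simply cite it. The fourth and final inequality $\vartheta(G^k)\le \tr f_k(\Aa)$ is exactly Theorem \ref{thm:ratiotypeboundsupperboundslovasztheta}, which is the step that uses the $k$-partial walk-regularity of $G$ (the hypothesis enters because the construction of the certificate matrix $\Aa_k=\J-nf_k(\Aa)+\Psi(f_k)\I$ requires that $f_k(\Aa)$ has constant diagonal, which holds precisely for $k$-partially walk-regular graphs).

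There is no real obstacle here, since every nontrivial ingredient has already been proved or cited in the paper; the only care needed is to make explicit that the hypothesis of $k$-partial walk-regularity is used solely in the last inequality (and hence the first three hold in complete generality), and to ensure the chain is written as a single displayed line matching \eqref{all-ineq}. The proof therefore amounts to at most two or three lines that string the cited results together.
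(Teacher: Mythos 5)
Your proposal is correct and matches the paper's (implicit) argument exactly: the paper likewise obtains the corollary by concatenating the definitional equality $\alpha_k(G)=\alpha(G^k)$, the elementary bound $\alpha(G^k)\le\Theta(G^k)$, Lov\'asz's inequality $\Theta(G^k)\le\vartheta(G^k)$, and Theorem~\ref{thm:ratiotypeboundsupperboundslovasztheta} for the final step. Your added remark that $k$-partial walk-regularity is needed only for the last inequality (to make the diagonal of $f_k(\Aa)$ constant) is accurate and consistent with the paper.
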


Hence, if the exact value of $\alpha_k(G)$ equals $\tr f_k(\Aa)$, or alternatively, if the ratio-type bound for $k$-walk partially walk-regular graphs is tight (so that both numbers are integers), then all inequalities become equalities, and as a byproduct we obtain the value $\Theta(G^k)=\vartheta(G^k)$. For $k=1$, this was already noted by Lov\'asz \cite{l79}.

 We should also note that, with \eqref{thetaLovasz-fk} and \eqref{all-ineq}, we correct the corresponding two last formulas of Fiol \cite{f20} (where $\alpha(G)$ must be $\alpha(G^k)$, and $\vartheta(G^k)$ and  $\Theta(G^k)$ must be interchanged).

\section{Bounds using the Shannon polynomials}
\label{sec:optimizingboundsalphak}

In \cite{h79}, Haemers answered negatively the three questions posed by Lov\'asz \cite{l79} concerning the Shannon capacity of a graph $G$. More precisely, Haemers obtained a bound for the Shannon capacity in terms of the rank of a given matrix related to $G$. In the past years, Haemers' bound has been extended and strengthened (see Bukh and Cox \cite{bc19}), also in the context of quantum information theory (see Duan, Severini, and Winter \cite{dsw13}, Levene, Paulsen, and Todorov \cite{ltp18}, Gribling and Li \cite{gl20}, and Gao, Gribling, and Li \cite{ggl22}). 

In this section, we generalize the celebrated Haemers' bound using what we call `Shannon polynomials'. This generalization does not follow trivially from Haemers' bound,  in the sense that it depends on a polynomial and an optimization method that cannot be directly derived from the original bound by Haemers. 

With this aim, we say that a matrix $\BB=(b_{ij})$ \textit{fits} a graph $G=(V,E)$, denoted by $\BB\propto G$,  if $b_{ii}\neq 0$ for every $i\in V$, and $b_{ij}=0$ if  $ij\notin E$
(observe that the definition of $\BB$ by Haemers \cite{h79} is more restrictive, but the proof also holds using our definition.)

\begin{theorem}[Haemers minimum rank bound \cite{h79}]
\label{th:Haemersbound}
Let $G$ be a graph, and $\BB$ a matrix that fits it. Then,
the Shannon capacity of $G$ satisfies that
$$
\Theta(G)\leq R(G)=\min_{\textbf{B}\,\propto\, G} \rank(\BB).
$$
\end{theorem}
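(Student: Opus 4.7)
The plan is to follow the classical three-step template for rank bounds on Shannon capacity: (i) show that the independence number is at most the rank of any matrix fitting $G$, (ii) show that the fitting relation respects Kronecker products and strong graph products, and (iii) iterate to obtain $\alpha(G^{\boxtimes k}) \leq \rank(\BB)^k$ and pass to the $k$-th root.

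For step (i), I would let $\BB \propto G$ and take a maximum independent set $S \subseteq V$, so $|S| = \alpha(G)$. Let $\BB_S$ denote the principal submatrix of $\BB$ indexed by $S$. For distinct $i, j \in S$ we have $ij \notin E$, so the fitting condition gives $b_{ij} = 0$, while $b_{ii} \neq 0$ by hypothesis. Hence $\BB_S$ is diagonal with nonzero diagonal, whence $\rank(\BB_S) = |S| = \alpha(G)$. Since the rank of a principal submatrix is at most the rank of the parent matrix, $\alpha(G) \leq \rank(\BB)$.

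For step (ii), I would check that if $\BB \propto G$ and $\BB' \propto G'$ then $\BB \otimes \BB' \propto G \boxtimes G'$. Index rows and columns of $\BB \otimes \BB'$ by pairs $(u, u')$. The diagonal entry at $(u, u')$ is $b_{uu}\, b'_{u'u'} \neq 0$. For distinct pairs $(u, u') \neq (v, v')$ with $(u, u')(v, v') \notin E(G \boxtimes G')$, either $u \neq v$ with $uv \notin E(G)$, forcing $b_{uv} = 0$, or $u' \neq v'$ with $u'v' \notin E(G')$, forcing $b'_{u'v'} = 0$; either way $b_{uv}\, b'_{u'v'} = 0$, as required.

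Iterating yields $\BB^{\otimes k} \propto G^{\boxtimes k}$ whenever $\BB \propto G$. Combining (i) and (ii) with the well-known identity $\rank(\BB^{\otimes k}) = \rank(\BB)^k$ gives
\[
\alpha(G^{\boxtimes k}) \leq \rank(\BB^{\otimes k}) = \rank(\BB)^k,
\]
so $\alpha(G^{\boxtimes k})^{1/k} \leq \rank(\BB)$ for every $k$. Taking the supremum over $k$ yields $\Theta(G) \leq \rank(\BB)$, and minimizing over $\BB \propto G$ gives $\Theta(G) \leq R(G)$. I do not foresee a real obstacle; the only point meriting attention is that the more permissive fitting definition adopted here (which leaves entries on edges completely free) is still compatible with steps (i) and (ii), and indeed both arguments use only the \emph{off-edge} zero pattern together with the nonvanishing of the diagonal.
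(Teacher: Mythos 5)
Your proof is correct and is essentially the standard Haemers argument that the paper cites rather than reproves: independent sets give diagonal principal submatrices of full rank, fitting is preserved under Kronecker/strong products, and the multiplicativity of rank under $\otimes$ yields the bound after taking $k$-th roots. Your closing observation—that only the off-edge zero pattern and the nonvanishing diagonal are used—is precisely the point the paper makes when it remarks that Haemers' proof still goes through under its more permissive definition of ``fits.''
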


Next we show an extension of Theorem \ref{th:Haemersbound} when considering the power graph. In particular, we show that the following rank-type bound both an  upper bound for the $k$-independence number of a graph and the Shannon capacity of graph powers.

\begin{theorem}[Rank-type bound]\label{th:Hamererankextension2}
The independence number of the $k$-power $G^k$ of a graph $G$, with adjacency matrix $\Aa=\Aa(G)$,
satisfies 
\begin{equation}
\label{eq:all-ineq}
   \alpha_k(G)=\alpha(G^k)\leq \Theta(G^k)\leq \min_{\stackrel{p\in \Real_k[x]}{p({\mathbf{A}})_{ii}\neq 0\,\, \forall i}} \rank(p(\Aa)). 
\end{equation}
\end{theorem}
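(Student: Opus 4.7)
The plan is to reduce the statement to Haemers' minimum rank bound (Theorem \ref{th:Haemersbound}) applied to the graph $G^k$, using polynomials in the adjacency matrix $\Aa$ as the matrices that fit $G^k$. The first equality $\alpha_k(G) = \alpha(G^k)$ is just the definition recalled in the introduction, and the inequality $\alpha(G^k) \le \Theta(G^k)$ follows by taking $n=1$ in the definition of the Shannon capacity. So the only real content is the bound $\Theta(G^k) \le \min_p \rank(p(\Aa))$.

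The key step is the combinatorial interpretation of walks. For a polynomial $p \in \Real_k[x]$ written as $p(x) = \sum_{\ell=0}^{k} c_\ell x^\ell$, the entry $(p(\Aa))_{ij}$ equals $\sum_{\ell=0}^{k} c_\ell (\Aa^\ell)_{ij}$, and $(\Aa^\ell)_{ij}$ counts walks of length $\ell$ from $i$ to $j$ in $G$. I would observe that if $i \ne j$ and $\{i,j\}$ is a non-edge of $G^k$, then $\dist_G(i,j) > k$, so there is no walk of length $\ell \le k$ between $i$ and $j$, forcing $(\Aa^\ell)_{ij} = 0$ for every $\ell \in \{0,1,\ldots,k\}$ and hence $(p(\Aa))_{ij} = 0$.

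Combining this with the hypothesis that $(p(\Aa))_{ii} \ne 0$ for all $i$, the matrix $\BB := p(\Aa)$ satisfies $\BB \propto G^k$ in the sense of the definition preceding Theorem \ref{th:Haemersbound}. Applying Haemers' bound to $G^k$ then yields
$$
\Theta(G^k) \le \rank(p(\Aa)),
$$
and minimizing over all admissible $p \in \Real_k[x]$ gives the stated inequality. Chaining this with $\alpha(G^k) \le \Theta(G^k)$ produces the full chain \eqref{eq:all-ineq}.

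I do not expect any genuine obstacle: the proof is essentially a one-line reduction to Haemers' theorem, modulo the walk-counting observation that guarantees $p(\Aa)$ fits $G^k$. The only point worth stressing is that this works precisely because the definition of $G^k$ uses distance at most $k$, which matches the length of the walks encoded by $\Aa^0,\Aa^1,\ldots,\Aa^k$; if one instead defined $G^k$ using walks (or paths) of length exactly $k$, this clean correspondence would break and a different family of polynomials would be needed.
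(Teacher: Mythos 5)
Your proposal is correct and follows essentially the same route as the paper: both reduce the statement to Haemers' minimum rank bound applied to $G^k$, using the observation that a polynomial $p\in\Real_k[x]$ with nonvanishing diagonal yields a matrix $p(\Aa)$ that fits $G^k$ because $(\Aa^\ell)_{ij}=0$ for $\ell\le k$ whenever $\dist_G(i,j)>k$. Your explicit walk-counting justification and the remark about the "exact distance $k$" variant of graph powers are slightly more detailed than the paper's proof, but the argument is the same.
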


 \begin{proof}
    Since $\alpha(G^{\boxtimes n})\geq\alpha(G)^{n}$, the Shannon capacity gives an upper bound for the independence number, $\alpha(G) \leq \Theta(G)$. Thus, it also follows that $\alpha_k(G)=\alpha(G^k)\leq \Theta(G^k)$, proving the first inequality in \eqref{eq:all-ineq}.     
    To show the second inequality in \eqref{eq:all-ineq}, we apply Theorem \ref{th:Haemersbound} in the $k$-th power graph. In order to apply Theorem \ref{th:Haemersbound} to the $k$-th power graph $G^k$, we need a matrix $\BB$ that \textit{fits} it; that is, with all non-zero diagonal entries and $b_{ij}=0$ if $\dist(i,j)>k$ in $G$. Moreover, we want $\BB$ to have a rank as small as possible to obtain a good bound. Consequently, following this strategy, we are searching for a polynomial $p_k\in \mathbb{R}_k[x]$ such that $p(\Aa)$ fits $G^k$ and minimizes (or, at least, gives a small value of) $\rank p(\Aa)$. Note that, since $\dgr p_k=k$ (where $\dgr$ is the degree of the polynomial), we have that $p_k(\Aa)_{i,j}=0$ for every pair of vertices $i,j$ in $G$ at a distance $\dist(i,j)>k$ (that is, nonadjacent in $G^k$). Then, $p_k(\Aa)$ fits $G^k$ if and only if $p_k(\Aa)_{ii}\neq 0$ for every $i\in V$. 
     \end{proof}

We call the last inequality in \eqref{eq:all-ineq} the \textit{rank-type bound}.
Note that, for $p(x)=x$, we obtain Theorem \ref{th:Haemersbound}.

Next, we show how to optimize the bound from Theorem \ref{th:Hamererankextension2} in order to find the best/optimal polynomial $p$.
We first recall that, since $\dgr p=k$, $p(\Aa)$ fits $G^k$ if and only if $(p(\Aa))_{ii}\neq 0$ for all $i\in V$. This leads us to propose the following procedure. Since, as mentioned, the Shannon capacity is very hard to compute in general (see, for instance, Alon and Lubetzky \cite{al06}),  we focus on investigating when the inequalities in Theorem \ref{th:Hamererankextension2} are actually equalities.

If $s\in \Real_k[x]$  is a polynomial that satisfies $(s(\Aa))_{ii}\neq 0$ for every $i\in V$, the best result is obtained by what we call the \textit{Shannon polynomials} $s_k(x)$, which are defined as follows. 
Let $G$ be a graph with diameter $D$ and spectrum $\spec G=\{\theta_0,\theta_1^{m_1},\ldots ,\theta_d^{m_d}\}$ with $\theta_0>\theta_1^{m_1}>\cdots >\theta_d^{m_d}$. Let $\b=(b_0,\ldots,b_d)\in \{0,1\}^{d+1}$ and $\m=(m_0,\ldots,m_d)$. For a given $k<D\ (\le d)$, let $I\subset [0,d]$ with $|I|=k$. Then, the Shannon polynomial $s_k$ is the one with zeros at $\theta_i$ for $i\in I$, that is, $s_k(x)=\gamma\prod_{i\in I}(x-\theta_i)$ for some constant $\gamma$, such that
\begin{equation}
\boxed{
 \begin{array}{rl}
 & \\
 {\tt maximize}\  & \displaystyle\sum_{i\in I} m_i, 
 \\
 {\tt subject\ to \ } & I\subset [0,d],\\
                      & \displaystyle\sum_{i=0}^d m_i s_k(\theta_i) =1. 
 \\
 & \\
 \end{array}
}
\label{MILP-Shannon-poly}
 \end{equation}
The idea of this formulation is to find the $k$ eigenvalues that are zeros of $s_k(\Aa)$ that maximize \eqref{MILP-Shannon-poly}, that is, to minimize the number of eigenvalues that are not zeros of $s_k(\Aa)$, which is equal to  $\rank s_k(\Aa)$. 
Hence, we optimize the corresponding bound  $\Theta (G^k)\le n- \sum_{i\in I} m_i$,
provided that the main diagonal of $s_k(\Aa)$ is non-zero. 
Moreover, a simple case in which this last requirement is fulfilled is when $G$ is a $k$-partially walk-regular graph. Then, $(s_k(\Aa))_{ii}=\frac{1}{n}\tr s_k(\Aa)$, which is non-zero because of the condition in \eqref{MILP-Shannon-poly} (where the `1' is for the sake of normalizing). 
A very simple algorithm to find the Shannon polynomial $s_k(x)$
is as follows:

\begin{algorithm}[H]
\caption{Shannon polynomial $s_k(x)$}
\label{alg:cycle-tree}
\begin{algorithmic}[1]
\item[{\bf Step 1}]
Given the spectrum $\spec G=\{\theta_0,\theta_1^{m_1},\ldots ,\theta_d^{m_d}\}$ with $\theta_0>\theta_1^{m_1}>\cdots >\theta_d^{m_d}$, list in non-increasing order the multiplicities $m_{i_0}\ge m_{i_1}\ge \cdots \ge m_{i_d}$ (if $m_{i_j}=m_{i_j+1}$, assume that $\theta_{i_j}>\theta_{i_j+1}$).
\item[{\bf Step 2}] 
Check if the polynomial $p_k=\prod_{j=0}^{k-1}(x-\theta_{i_j})$ 
satisfies $T=\sum_{i=0}^d m_i p_k(\theta_i)\neq 0$.
\item[{\bf Step 3}] 
If $T\neq 0$, let $s_k(x)=\frac{1}{T}p_k(x)$.
\item[{\bf Step 4}] 
Otherwise, if $T=0$, go to Step 1 with $p_k$ obtained by changing $\theta_{i_{k-1}}$ by $\theta_{i_{k}}$.
\item[{\bf Step 5}] 
Repeat the procedure, each time reducing (as much as possible) the sum of the $k$ multiplicities involved until you obtain $T\neq 0$ and stop in Step 3.
\end{algorithmic}
\end{algorithm}

By using the Shannon polynomials, we obtain the following result.

\begin{theorem}\label{thm:secondboundShannonpolys}
Let $G$ be a $k$-partially walk-regular graph with adjacency matrix $\Aa$, spectrum $\spec G=\{\theta_0,\theta_1^{m_1},\ldots ,\theta_d^{m_d}\}$, with $\theta_0>\theta_1^{m_1}>\cdots >\theta_d^{m_d}$, 
and Shannon polynomial $s_k\in \Real_k[x]$. Then, the Shannon capacity of the $k$-power graph $G^k$ satisfies
\begin{equation}
\Theta(G^k)\le \rank s_k(\Aa) = |\{m_i: s_k(\theta_i)\neq 0\}|. 
\end{equation}
\end{theorem}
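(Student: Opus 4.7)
The plan is to apply the rank-type bound from Theorem \ref{th:Hamererankextension2} to the specific polynomial $p = s_k$ produced by the optimization \eqref{MILP-Shannon-poly}. The only hypothesis to verify is that $s_k(\Aa)$ \emph{fits} $G^k$ in the sense of Section \ref{sec:optimizingboundsalphak}; once that is done, the theorem immediately yields $\Theta(G^k) \le \rank s_k(\Aa)$, and the announced formula is just a spectral reading of the right-hand side.

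First I would dispatch the off-diagonal condition. Since $\dgr s_k \le k$, the matrix $s_k(\Aa)$ is a linear combination of $\I,\Aa,\dots,\Aa^k$, and $(\Aa^j)_{uv}$ counts walks of length $j$ from $u$ to $v$, hence vanishes whenever $\dist_G(u,v) > j$. Therefore, for every pair of vertices with $uv \notin E(G^k)$, equivalently $\dist_G(u,v) > k$, every summand contributes $0$ and $(s_k(\Aa))_{uv} = 0$ automatically. The essential step is the diagonal condition, and this is exactly where the $k$-partial walk-regularity hypothesis is used: it ensures that for every $p \in \Real_k[x]$ the diagonal of $p(\Aa)$ is constant, so that
$$
(s_k(\Aa))_{uu} \;=\; \frac{1}{n}\tr s_k(\Aa) \;=\; \frac{1}{n}\sum_{i=0}^{d} m_i\, s_k(\theta_i) \;=\; \frac{1}{n},
$$
by the normalization built into \eqref{MILP-Shannon-poly}. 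Thus every diagonal entry is nonzero, and $s_k(\Aa) \propto G^k$.

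With both conditions verified, Theorem \ref{th:Hamererankextension2} supplies $\Theta(G^k) \le \rank s_k(\Aa)$. Finally, since $s_k(\Aa)$ is symmetric and its eigenvalues are $s_k(\theta_i)$ with multiplicity $m_i$, its rank equals the sum $\sum_{i\,:\,s_k(\theta_i)\ne 0} m_i$ of the multiplicities indexed by the eigenvalues of $G$ at which $s_k$ does not vanish, which is the quantity appearing in the statement. The only potentially nontrivial point is the nonvanishing of the diagonal of $s_k(\Aa)$, and as shown above this reduces to a one-line trace computation combining the partial walk-regularity of $G$ with the LP normalization, so I do not anticipate a genuine obstacle beyond this.
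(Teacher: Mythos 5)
Your proposal is correct and follows essentially the same route as the paper: the paper's (implicit) justification is precisely to apply Theorem \ref{th:Hamererankextension2} with $p=s_k$, using $k$-partial walk-regularity to get $(s_k(\Aa))_{uu}=\frac{1}{n}\tr s_k(\Aa)=\frac{1}{n}\neq 0$ from the normalization in \eqref{MILP-Shannon-poly}, and then reading off $\rank s_k(\Aa)$ as the sum of the multiplicities $m_i$ over the eigenvalues where $s_k$ does not vanish.
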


\begin{example}[Haemers \cite{h79}]
Let $G$ be the complement of the Schl\"afli graph (see, for instance, Seidel \cite{s68}). Then, $G$ is a strongly regular graph with adjacency matrix $\Aa$,  parameters $(n,k;a,c)=(27,10; 1,5)$ and spectrum $\spec G=\{10,1^{20},-5^{6}\}$. Then, the polynomial used by Haemers \cite{h79} corresponds, up to a multiplicative constant, to the Shannon polynomial
$s_1(x)=\frac{1}{20}(1-x)$. Then, since $s_1(1)=0$, we infer that $\Theta(G)\le \rank s_1(\Aa)=27-20=7.$
\end{example}

\begin{example}
Let $G$ be the first subconstituent of McLaughlin graph \cite{ml69}. This is a strongly regular graph with parameters $(n,k;a,c)=(112,30; 2,10)$ and spectrum $\spec G=\{30,2^{90},-10^{21}\}$. Then, the 1-Shannon polynomial is
$s_1(x)=\frac{1}{28}(x-2)$. Then, since $s_1(2)=0$, we infer that $\Theta(G)\le \rank s_1(\Aa)=112-90=22$, whereas the ratio bound gives $\frac{112}{1-\frac{30}{-10}}=28$. Thus, $\Theta(G)\le 22$.
\end{example}

Haemers investigated the general case of a strongly regular graph \cite{H1978}. Recall that the spectrum $\{k,\theta^{m(\theta)},\tau^{m(\tau)}\}$ of such a graph can be deduced from its parameters $(n,k;a,c)$ as follows.
Let $\Delta=(a-c)^2+4(k-c)$. Then,
\begin{align*}
\theta &=(a-c+\sqrt{\Delta})/2,\qquad
\tau =(a-c+\sqrt{\Delta})/2,\\
m(\theta)&=\frac{(n-1)\tau+k}{\tau-\theta}=\frac{1}{2}\left(n-1-\frac{2k+(n-1)(a-c)}{\sqrt{\Delta}}\right),\\
m(\tau)&=\frac{(n-1)\theta+k}{\theta-\tau}=\frac{1}{2}\left(n-1+\frac{2k+(n-1)(a-c)}{\sqrt{\Delta}}\right).\\
\end{align*}
Thus, Haemers \cite{H1978} implicitly used Shannon polynomials of degree 1 to show that
$$
R(G)\le \min \{1+m(\theta),1+m(\tau)\}.
$$
Moreover, from the results of Schrijver \cite{sh79}, he also noted that if $G$ is a (non-trivial) strongly regular graph, we have equality in \eqref{Lovasz-Hoffman}, that is
$\theta(G)=H_1=\frac{n}{1-\frac{k}{\tau}}$.
Haemers also commented that it is not difficult to find strongly regular graphs satisfying $1+m(\tau)<H_1=\frac{n}{1-\frac{k}{tau}}$. In fact, he provided an infinite family derived from an elliptic quadric in $PG(5,q)$, with parameters $n=(q^3+1)(q+1)$, $k=q(q^2+1)$, $\theta=q-1$, $\tau=-q^2-1$, and $m(\tau)=q(q^2-q+1)$, so
$$
R(G)\le 1+q(q^2-q+1)<q^3+1=H_1=\theta(G).
$$

\begin{figure}[htp!]
	\begin{center}
 \includegraphics[width=6cm]{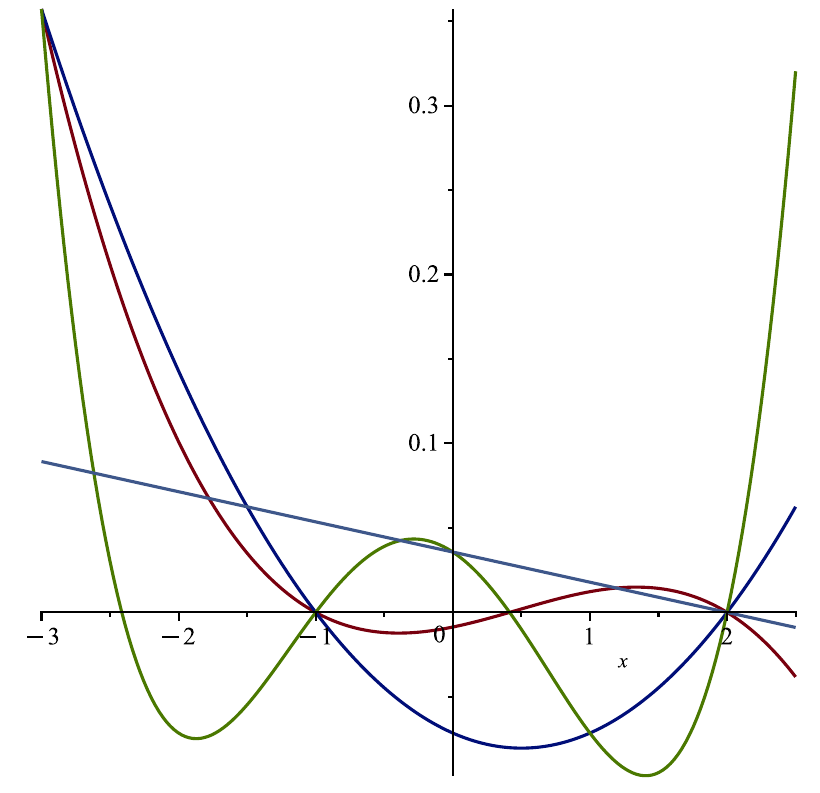}
	\end{center}
	\vskip-.25cm
	\caption{The Shannon polynomials $s_i(x)$ of the Coxeter graph for $i=1,2,3,4$ (Example \ref{ex:coxeter-shannon}).}
	\label{fig:Shannon-pols-Coxeter}
\end{figure}

\begin{example}
\label{ex:coxeter-shannon}
The Coxeter graph $C$ is a cubic distance-regular graph with $n=28$ vertices, diameter $d=4$, and spectrum $\spec C=\{3^1,2^8,(\sqrt{2}-1)^6,-1^7, (-\sqrt{2}-1)^6\}$ (see  Examples \ref{ex:f3+g3} and \ref{ex:coxeter2}). Then, the ordered multiplicities are $8,7,6,6,1$, and the Shannon polynomials turn out to be (see Figure \ref{fig:Shannon-pols-Coxeter}):
\begin{align*}
s_1(x) &=\frac{1}{-56}(x-2),\\
s_2(x) &= \frac{1}{28}(x^2-x-2),\\
s_3(x) &=\frac{1}{-56-28\sqrt{2}}(x^3-\sqrt{2}x^2
-[3-\sqrt{2}]x+2\sqrt{2}-2),\\
s_4(x) &=\frac{1}{56}(x^4+x^3 -5x^2-3x+2).
\end{align*}
Hence, using the fact that we can compute the rank by adding up the multiplicities of the eigenvalues where the Shannon polynomials do not vanish, we obtain
$$
\Theta(C)\le 20,\quad\Theta(C^2)\le 13,\quad \Theta(C^3)\le 7, \quad \Theta(C^4)=1.
$$
\end{example}

\section{Concluding remarks}

In this paper, we presented two algebraic bounds on the Shannon capacity of graph powers: the ratio-type bound and the rank-type bound. In particular, we used the so-called minor polynomials and an LP introduced by the third author in \cite{f20} to show a first upper bound for the Shannon capacity of the $k$-th power of a graph; the ratio-type bound. We also proved that this first spectral bound provides an alternative spectral proof of the fact that the Hoffman bound on the independence number of a graph is also an upper bound on the Shannon capacity of the graph, which is a result originally shown by Lov\'asz \cite{l79}. Secondly, we proposed an extension of the Haemers minimum rank bound on the Shannon capacity of a graph such that it also holds in the more general case of graph powers. We did so by using a new family of polynomials, the so-called Shannon polynomials, which we showed that can be computed by running a simple algorithm or by solving an LP. While the ratio-type bound on the $k$-independence number of a graph has recently been used very successfully in coding theory by the first author (see e.g. \cite{AKR2024,ANR2025, AAR2025, APR2025}), in this paper we show its first application in the context of information theory.

The Appendix contains an overview of the performance of the two new algebraic bounds on the Shannon capacity of graph powers.
Note that both bounds are sharp for some Sage-named graphs (see Tables \ref{fig:alsoLovaszthetak=2}
- \ref{fig:alsoLovaszthetak=5}) and powers of cycles (see Tables \ref{table:cyclepowersranktypeandratiotypeboundsk=4}-\ref{table:cyclepowersranktypeandratiotypeboundsk=5}). Note that when these two new algebraic bounds are tight, they can be used to easily derive the exact values of the Lov\'asz theta number (which relies on solving an SDP) and the Shannon capacity (which is not known to be computable) of the corresponding graph power. In fact, the computational results show that in several instances the ratio-type bound is as strong as the celebrated Lov\'asz theta bound. However, while the Lov\'asz theta bound is computed using semidefinite programming (SDP) methods, the ratio-type only relies on solving a linear program (LP) - practice shows that solving SDPs often struggles against practical intractability and requires highly specialized methods, while LPs are also solvable in polynomial time but are faster and easier to compute. We also observe that both ratio-type and rank-type bounds seem to be incomparable. For $k=1$, we found some instances of strongly-regular graphs for which the rank-type bound outperforms the ratio-type bound (see Table  \ref{tab:srgk=1}), agreeing with known results by Haemers \cite{H1978}. Also, for $k=2$ we obtained several instances of distance-regular graphs for which the rank-type bound is sharp (see Table \ref{tab:DRGalpha2}), and also for multiple Sagenamed graphs for small $k$. Thus, while the ratio-type bound seems to generally outperform the rank-type bound when $k>1$, the rank-type bound can be applied to irregular random graphs with large diameter, while the ratio-type bound assumes graph regularity and thus is less general. The power of the new bounds on the Shannon capacity of graph powers relies on the combination of spectral/algebraic methods with optimization methods, which seems to often capture nicely the graph structure. 


\subsection*{Acknowledgments} 
A. Abiad is supported by NWO (Dutch Research Council) through the grants VI.Vidi.213.085 and OCENW.KLEIN.475. C. Dalf\'o and M. A. Fiol are funded by AGAUR from the Catalan Government under project 2021SGR00434 and MICINN from the Spanish Government under project PID2020-115442RB-I00.
M. A. Fiol's research is also supported by a grant from the Universitat Polit\`ecnica de Catalunya, reference AGRUPS-2025. The authors thank Luuk Reijnders for his support with Sagemath. The first author thanks Jeroen Zuiddam for inspiring discussions on the topic.


\newpage

\section*{Appendix}


{\tiny{
\begin{table}[!ht]
    \centering
    \tiny
    \begin{tabular}{|llll|cc|}
    \hline
    $n$ & $d$ & $\lambda$ & $\mu$ & Rank-type & Ratio-type \\
     &  &  &  & bound & bound \\
    \hline
    $5$ & $2$ & $0$ & $1$ & $3$ & $2$ \\
    $9$ & $4$ & $1$ & $2$ & $5$ & $3$ \\
    $10$ & $3$ & $0$ & $1$ & $5$ & $4$ \\
    $10$ & $6$ & $3$ & $4$ & $5$ & $2$ \\
    $13$ & $6$ & $2$ & $3$ & $7$ & $3$ \\
    $15$ & $6$ & $1$ & $3$ & $6$ & $5$ \\
    $15$ & $8$ & $4$ & $4$ & $6$ & $3$ \\
    $16$ & $5$ & $0$ & $2$ & $6$ & $6$ \\
    $16$ & $6$ & $2$ & $2$ & $7$ & $4$ \\
    $16$ & $9$ & $4$ & $6$ & $7$ & $4$ \\
    $16$ & $10$ & $6$ & $6$ & $6$ & $2$ \\
    $17$ & $8$ & $3$ & $4$ & $9$ & $4$ \\
    $21$ & $10$ & $3$ & $6$ & $7$ & $6$ \\
    $21$ & $10$ & $5$ & $4$ & $7$ & $3$ \\
    $25$ & $8$ & $3$ & $2$ & $9$ & $5$ \\
    $25$ & $12$ & $5$ & $6$ & $13$ & $5$ \\
    $25$ & $16$ & $9$ & $12$ & $9$ & $5$ \\
    $26$ & $10$ & $3$ & $4$ & $13$ & $6$ \\
    $26$ & $15$ & $8$ & $9$ & $13$ & $4$ \\
    $\mathbf{27}$ & $\mathbf{10}$ & $\mathbf{1}$ & $\mathbf{5}$ & $\mathbf{7}$ & $\mathbf{9}$ \\
    $27$ & $16$ & $10$ & $8$ & $7$ & $3$ \\
    $28$ & $12$ & $6$ & $4$ & $8$ & $4$ \\
    $28$ & $15$ & $6$ & $10$ & $8$ & $7$ \\
    $29$ & $14$ & $6$ & $7$ & $15$ & $5$ \\
    $35$ & $16$ & $6$ & $8$ & $15$ & $7$ \\
    $35$ & $18$ & $9$ & $9$ & $15$ & $5$ \\
    $36$ & $10$ & $4$ & $2$ & $11$ & $6$ \\
    $36$ & $14$ & $4$ & $6$ & $15$ & $8$ \\
    $36$ & $14$ & $7$ & $4$ & $9$ & $4$ \\
    $36$ & $15$ & $6$ & $6$ & $16$ & $6$ \\
    $36$ & $20$ & $10$ & $12$ & $16$ & $6$ \\
    $36$ & $21$ & $10$ & $15$ & $9$ & $8$ \\
    $36$ & $21$ & $12$ & $12$ & $15$ & $4$ \\
    $36$ & $25$ & $16$ & $20$ & $11$ & $6$ \\
    $37$ & $18$ & $8$ & $9$ & $19$ & $6$ \\
    $40$ & $12$ & $2$ & $4$ & $16$ & $10$ \\
    $40$ & $27$ & $18$ & $18$ & $16$ & $4$ \\
    $41$ & $20$ & $9$ & $10$ & $21$ & $6$ \\
    $45$ & $12$ & $3$ & $3$ & $21$ & $9$ \\
    $45$ & $16$ & $8$ & $4$ & $10$ & $5$ \\
    $45$ & $22$ & $10$ & $11$ & $23$ & $6$ \\
    $45$ & $28$ & $15$ & $21$ & $10$ & $9$ \\
    $45$ & $32$ & $22$ & $24$ & $21$ & $5$ \\
    $49$ & $12$ & $5$ & $2$ & $13$ & $7$ \\
    $49$ & $18$ & $7$ & $6$ & $19$ & $7$ \\
    $49$ & $24$ & $11$ & $12$ & $25$ & $7$ \\
    $49$ & $30$ & $17$ & $20$ & $19$ & $7$ \\
    $49$ & $36$ & $25$ & $30$ & $13$ & $7$ \\
    $50$ & $7$ & $0$ & $1$ & $22$ & $15$ \\
    $50$ & $21$ & $8$ & $9$ & $25$ & $8$ \\
    $50$ & $28$ & $15$ & $16$ & $25$ & $6$ \\
    $50$ & $42$ & $35$ & $36$ & $22$ & $3$ \\
    $53$ & $26$ & $12$ & $13$ & $27$ & $7$ \\
    $55$ & $18$ & $9$ & $4$ & $11$ & $5$ \\
    $55$ & $36$ & $21$ & $28$ & $11$ & $10$ \\
    $56$ & $10$ & $0$ & $2$ & $21$ & $16$ \\
    $56$ & $45$ & $36$ & $36$ & $21$ & $3$ \\
    $57$ & $24$ & $11$ & $9$ & $19$ & $6$ \\
    $57$ & $32$ & $16$ & $20$ & $19$ & $9$ \\
    $61$ & $30$ & $14$ & $15$ & $31$ & $7$ \\
    $63$ & $30$ & $13$ & $15$ & $28$ & $9$ \\
    $63$ & $32$ & $16$ & $16$ & $28$ & $7$ \\
    $64$ & $14$ & $6$ & $2$ & $15$ & $8$ \\
    $64$ & $18$ & $2$ & $6$ & $19$ & $16$ \\
    $64$ & $21$ & $8$ & $6$ & $22$ & $8$ \\
    $64$ & $27$ & $10$ & $12$ & $28$ & $10$ \\
    $64$ & $28$ & $12$ & $12$ & $29$ & $8$ \\
    $64$ & $35$ & $18$ & $20$ & $29$ & $8$ \\
    \hline
    \end{tabular}
    \begin{tabular}{|llll|cc|}
    \hline
     $n$ & $d$ & $\lambda$ & $\mu$ & Rank-type & Ratio-type \\
     &  &  &  & bound & bound \\
    \hline
    $64$ & $36$ & $20$ & $20$ & $28$ & $6$ \\
    $64$ & $42$ & $26$ & $30$ & $22$ & $8$ \\
    $64$ & $45$ & $32$ & $30$ & $19$ & $4$ \\
    $64$ & $49$ & $36$ & $42$ & $15$ & $8$ \\
    $65$ & $32$ & $15$ & $16$ & $33$ & $8$ \\
    $66$ & $20$ & $10$ & $4$ & $12$ & $6$ \\
    $66$ & $45$ & $28$ & $36$ & $12$ & $11$ \\
    $70$ & $27$ & $12$ & $9$ & $21$ & $7$ \\
    $70$ & $42$ & $23$ & $28$ & $21$ & $10$ \\
    $73$ & $36$ & $17$ & $18$ & $37$ & $8$ \\
    $77$ & $16$ & $0$ & $4$ & $22$ & $21$ \\
    $77$ & $60$ & $47$ & $45$ & $22$ & $3$ \\
    $78$ & $22$ & $11$ & $4$ & $13$ & $6$ \\
    $78$ & $55$ & $36$ & $45$ & $13$ & $12$ \\
    $81$ & $16$ & $7$ & $2$ & $17$ & $9$ \\
    $81$ & $20$ & $1$ & $6$ & $21$ & $21$ \\
    $81$ & $24$ & $9$ & $6$ & $25$ & $9$ \\
    $81$ & $30$ & $9$ & $12$ & $31$ & $13$ \\
    $81$ & $32$ & $13$ & $12$ & $33$ & $9$ \\
    $81$ & $40$ & $19$ & $20$ & $41$ & $9$ \\
    $81$ & $48$ & $27$ & $30$ & $33$ & $9$ \\
    $81$ & $50$ & $31$ & $30$ & $31$ & $6$ \\
    $81$ & $56$ & $37$ & $42$ & $25$ & $9$ \\
    $81$ & $60$ & $45$ & $42$ & $21$ & $3$ \\
    $81$ & $64$ & $49$ & $56$ & $17$ & $9$ \\
    $82$ & $36$ & $15$ & $16$ & $41$ & $10$ \\
    $82$ & $45$ & $24$ & $25$ & $41$ & $8$ \\
    $85$ & $20$ & $3$ & $5$ & $35$ & $17$ \\
    $85$ & $64$ & $48$ & $48$ & $35$ & $5$ \\
    $89$ & $44$ & $21$ & $22$ & $45$ & $9$ \\
    $91$ & $24$ & $12$ & $4$ & $14$ & $7$ \\
    $91$ & $66$ & $45$ & $55$ & $14$ & $13$ \\
    $96$ & $19$ & $2$ & $4$ & $39$ & $20$ \\
    $96$ & $20$ & $4$ & $4$ & $46$ & $16$ \\
    $96$ & $75$ & $58$ & $60$ & $46$ & $6$ \\
    $96$ & $76$ & $60$ & $60$ & $39$ & $4$ \\
    $97$ & $48$ & $23$ & $24$ & $49$ & $9$ \\
    $99$ & $48$ & $22$ & $24$ & $45$ & $11$ \\
    $99$ & $50$ & $25$ & $25$ & $45$ & $9$ \\
    $100$ & $18$ & $8$ & $2$ & $19$ & $10$ \\
    $100$ & $22$ & $0$ & $6$ & $23$ & $26$ \\
    $100$ & $27$ & $10$ & $6$ & $28$ & $10$ \\
    $100$ & $33$ & $14$ & $9$ & $25$ & $8$ \\
    $100$ & $36$ & $14$ & $12$ & $37$ & $10$ \\
    $100$ & $44$ & $18$ & $20$ & $45$ & $12$ \\
    $100$ & $45$ & $20$ & $20$ & $46$ & $10$ \\
    $100$ & $54$ & $28$ & $30$ & $46$ & $10$ \\
    $100$ & $55$ & $30$ & $30$ & $45$ & $8$ \\
    $100$ & $63$ & $38$ & $42$ & $37$ & $10$ \\
    $100$ & $66$ & $41$ & $48$ & $25$ & $12$ \\
    $100$ & $72$ & $50$ & $56$ & $28$ & $10$ \\
    $100$ & $77$ & $60$ & $56$ & $23$ & $3$ \\
    $100$ & $81$ & $64$ & $72$ & $19$ & $10$ \\
    $101$ & $50$ & $24$ & $25$ & $51$ & $10$ \\
    $105$ & $26$ & $13$ & $4$ & $15$ & $7$ \\
    $105$ & $32$ & $4$ & $12$ & $21$ & $25$ \\
    $105$ & $72$ & $51$ & $45$ & $21$ & $4$ \\
    $105$ & $78$ & $55$ & $66$ & $15$ & $14$ \\
    $109$ & $54$ & $26$ & $27$ & $55$ & $10$ \\
    $111$ & $44$ & $19$ & $16$ & $37$ & $9$ \\
    $111$ & $66$ & $37$ & $42$ & $37$ & $12$ \\
    $\mathbf{112}$ & $\mathbf{30}$ & $\mathbf{2}$ & $\mathbf{10}$ & $\mathbf{22}$ & $\mathbf{28}$ \\
     $112$ & $81$ & $60$ & $54$ & $22$ & $4$ \\
     $113$ & $56$ & $27$ & $28$ & $57$ & $10$ \\
     $117$ & $36$ & $15$ & $9$ & $27$ & $9$ \\
     $117$ & $80$ & $52$ & $60$ & $27$ & $13$ \\
     $119$ & $54$ & $21$ & $27$ & $35$ & $17$ \\
     $119$ & $64$ & $36$ & $32$ & $35$ & $7$ \\
    \hline
    \end{tabular}
    \caption{Comparison of the performance of the two new algebraic bounds for $k=1$ for several strongly regular graphs. 
    Entry is in bold if the rank-type bound performs strictly better.}
    \label{tab:srgk=1}
\end{table}
}}

\begin{table}[!ht]
    \centering
    \begin{tabular}{|l|cc|c|}
    \hline
    Intersection Array & Rank-type  & Ratio-type & $\alpha_2(G)$ \\
    & bound & bound &  \\
    \hline
    $\left[4, 3, 2, 1, 1, 2, 3, 4\right]$ & $6$ & $2$ & $2$ \\
    $\left[8, 6, 1, 1, 3, 8\right]$ & $7$ & $3$ & $3$ \\
    $\left[8, 6, 1, 1, 3, 8\right]$ & $7$ & $3$ & $3$ \\
    $\left[7, 6, 4, 1, 3, 7\right]$ & $\mathbf{2}$ & $2$ & $2$ \\
    $\left[6, 5, 4, 1, 2, 6\right]$ & $\mathbf{2}$ & $2$ & $2$ \\
    $\left[5, 4, 1, 1, 1, 1, 4, 5\right]$ & $14$ & $3$ & $2$ \\
    $\left[4, 3, 3, 1, 1, 2\right]$ & $\mathbf{7}$ & $7$ & $7$ \\
    $\left[6, 4, 2, 1, 1, 1, 4, 6\right]$ & $18$ & $4$ & $3$ \\
    $\left[27, 10, 1, 1, 10, 27\right]$ & $8$ & $2$ & $2$ \\
    $\left[9, 6, 3, 1, 2, 3\right]$ & $10$ & $4$ & $4$ \\
    $\left[4, 3, 3, 2, 2, 1, 1, 1, 1, 2, 2, 3, 3, 4\right]$ & $42$ & $14$ & $14$ \\
    $\left[16, 9, 4, 1, 1, 4, 9, 16\right]$ & $22$ & $3$ & $2$ \\
    $\left[13, 12, 9, 1, 4, 13\right]$ & $\mathbf{2}$ & $2$ & $2$ \\
    $\left[7, 6, 6, 1, 1, 1, 1, 6, 6, 7\right]$ & $44$ & $6$ & $4$ \\
    $\left[15, 14, 10, 3, 1, 5, 12, 15\right]$ & $23$ & $5$ & $4$ \\
    $\left[10, 9, 8, 2, 1, 1, 2, 8, 9, 10\right]$ & $42$ & $7$ & $4$ \\
    $\left[5, 4, 4, 3, 1, 1, 2, 2\right]$ & $36$ & $13$ & $12$ \\
    $\left[20, 12, 6, 2, 1, 4, 9, 16\right]$ & $36$ & $6$ & $3$ \\
    $\left[16, 15, 12, 4, 1, 1, 4, 12, 15, 16\right]$ & $44$ & $7$ & $4$ \\
    \hline
\end{tabular}
    \caption{Comparison of the performance of the two new algebraic bounds for $\alpha_2$ for several small distance-regular graphs. Entry is in bold if the rank-type bound is sharp.}
    \label{tab:DRGalpha2}
\end{table}

\begin{table}[!ht]
    \centering
    \begin{tabular}{|l|cc|c|}
    \hline
    Intersection Array& Rank-type  & Ratio-type & $\alpha_3(G)$ \\
    & bound & bound &  \\
    \hline
    $\left[4, 3, 2, 1, 1, 2, 3, 4\right]$ & $5$ & $2$ & $2$ \\
    $\left[5, 4, 1, 1, 1, 1, 4, 5\right]$ & $9$ & $2$ & $2$ \\
    $\left[6, 4, 2, 1, 1, 1, 4, 6\right]$ & $13$ & $3$ & $3$ \\
    $\left[4, 3, 3, 2, 2, 1, 1, 1, 1, 2, 2, 3, 3, 4\right]$ & $28$ & $7$ & $7$ \\
    $\left[16, 9, 4, 1, 1, 4, 9, 16\right]$ & $8$ & $2$ & $2$ \\
    $\left[7, 6, 6, 1, 1, 1, 1, 6, 6, 7\right]$ & $23$ & $3$ & $2$ \\
    $\left[15, 14, 10, 3, 1, 5, 12, 15\right]$ & $22$ & $3$ & $2$ \\
    $\left[10, 9, 8, 2, 1, 1, 2, 8, 9, 10\right]$ & $22$ & $3$ & $2$ \\
    $\left[5, 4, 4, 3, 1, 1, 2, 2\right]$ & $9$ & $8$ & $7$ \\
    $\left[20, 12, 6, 2, 1, 4, 9, 16\right]$ & $9$ & $2$ & $2$ \\
    $\left[16, 15, 12, 4, 1, 1, 4, 12, 15, 16\right]$ & $23$ & $3$ & $2$ \\
    \hline
\end{tabular}
    \caption{Comparison of the performance of the two new algebraic bounds for $\alpha_3$ for several small distance-regular graphs.}
    \label{tab:DRGalpha3}
\end{table}


{\tiny{
\begin{table}[!ht]
\centering
\renewcommand{\arraystretch}{1.15}
\begin{tabular}{|l|c c c|c|}
\hline
Graph & Rank-type bound & Ratio-type bound & Lov\'asz theta number & $\alpha_2(G)$ \\ \hline
Balaban 10-cage & $54$ & $17$ & $17$ & $17$ \\
Balaban 11-cage & $88$ & $27$ & $26$ & $24$ \\
Bidiakis cube & $6$ & $3$ & $2$ & $2$ \\
Biggs-Smith graph & $67$ & $23$ & $23$ & $21$ \\
Blanusa First Snark Graph & $13$ & $4$ & $4$ & $4$ \\
Blanusa Second Snark Graph & $14$ & $4$ & $4$ & $4$ \\
Brinkmann graph & $17$ & $3$ & $3$ & $3$ \\
Bucky Ball & $46$ & $14$ & $12$ & $12$ \\
Chvatal graph & $6$ & $2$ & $1$ & $1$ \\
Clebsch graph & $1$ & $1$ & $1$ & $1$ \\
Conway-Smith graph for 3S7 & $19$ & $4$ & $4$ & $3$ \\
Coxeter Graph & $13$ & $7$ & $7$ & $7$ \\
Desargues Graph & $10$ & $5$ & $5$ & $4$ \\
Dejter Graph & $58$ & $16$ & $16$ & $16$ \\
Dodecahedron & $11$ & $4$ & $4$ & $4$ \\
Double star snark & $22$ & $7$ & $7$ & $6$ \\
Dyck graph & $14$ & $8$ & $8$ & $8$ \\
F26A Graph & $14$ & $6$ & $6$ & $6$ \\
Flower Snark & $16$ & $5$ & $5$ & $5$ \\
Folkman Graph & $6$ & $3$ & $3$ & $3$ \\
Foster Graph & $54$ & $22$ & $22$ & $21$ \\
Foster graph for 3.Sym(6) graph & $18$ & $4$ & $4$ & $3$ \\
Franklin graph & $6$ & $3$ & $2$ & $2$ \\
Gray graph & $26$ & $11$ & $11$ & $11$ \\
Harries Graph & $60$ & $17$ & $17$ & $17$ \\
Harries-Wong graph & $60$ & $17$ & $17$ & $17$ \\
Heawood graph & $2$ & $2$ & $2$ & $2$ \\
Hexahedron & $2$ & $2$ & $2$ & $2$ \\
Hoffman Graph & $6$ & $2$ & $2$ & $2$ \\
Hoffman-Singleton graph & $1$ & $1$ & $1$ & $1$ \\
Holt graph & $15$ & $4$ & $3$ & $3$ \\
Icosahedron & $4$ & $2$ & $2$ & $2$ \\
Klein 3-regular Graph & $40$ & $13$ & $13$ & $12$ \\
Klein 7-regular Graph & $9$ & $3$ & $3$ & $3$ \\
Ljubljana graph & $84$ & $27$ & $27$ & $26$ \\
McGee graph & $16$ & $6$ & $5$ & $5$ \\
Meredith Graph & $38$ & $14$ & $10$ & $10$ \\
Moebius-Kantor Graph & $9$ & $4$ & $4$ & $4$ \\
Nauru Graph & $12$ & $6$ & $6$ & $6$ \\
Octahedron & $1$ & $1$ & $1$ & $1$ \\
Pappus Graph & $8$ & $3$ & $3$ & $3$ \\
Perkel Graph & $19$ & $5$ & $5$ & $5$ \\
Petersen graph & $1$ & $1$ & $1$ & $1$ \\
Robertson Graph & $15$ & $3$ & $3$ & $3$ \\
Schläfli graph & $1$ & $1$ & $1$ & $1$ \\
Shrikhande graph & $1$ & $1$ & $1$ & $1$ \\
Sims-Gewirtz Graph & $1$ & $1$ & $1$ & $1$ \\
Sylvester Graph & $10$ & $6$ & $6$ & $6$ \\
Szekeres Snark Graph & $33$ & $12$ & $10$ & $9$ \\
Thomsen graph & $1$ & $1$ & $1$ & $1$ \\
Truncated Tetrahedron & $6$ & $3$ & $3$ & $3$ \\
Tutte-Coxeter graph & $11$ & $6$ & $6$ & $6$ \\
Twinplex Graph & $8$ & $2$ & $2$ & $2$ \\
Wagner Graph & $4$ & $2$ & $1$ & $1$ \\
Wells graph & $14$ & $3$ & $3$ & $2$ \\
\hline
\end{tabular}
\caption{The two new algebraic bounds in comparison with the Lov\'asz theta number for Sagemath named graphs when $k=2$.}
 	\label{fig:alsoLovaszthetak=2}
\end{table}
}}

{\footnotesize{
\begin{table}[!ht]
\centering
\renewcommand{\arraystretch}{1.15}
\begin{tabular}{|l|c c c|c|}
\hline
Graph & Rank-type bound & Ratio-type bound & Lov\'asz theta number & $\alpha_3(G)$ \\
\hline
Balaban 10-cage & $46$ & $11$ & $10$ & $9$ \\
Biggs-Smith graph & $51$ & $14$ & $14$ & $12$ \\
Blanusa First Snark Graph & $11$ & $2$ & $2$ & $2$ \\
Blanusa Second Snark Graph & $13$ & $2$ & $2$ & $2$ \\
Brinkmann graph & $15$ & $1$ & $1$ & $1$ \\
Bucky Ball & $41$ & $8$ & $7$ & $7$ \\
Conway-Smith graph for 3S7 & $13$ & $3$ & $3$ & $3$ \\
Coxeter Graph & $7$ & $4$ & $4$ & $4$ \\
Desargues Graph & $6$ & $2$ & $2$ & $2$ \\
Dejter Graph & $44$ & $8$ & $8$ & $8$ \\
Dodecahedron & $7$ & $2$ & $2$ & $2$ \\
Double star snark & $18$ & $4$ & $4$ & $4$ \\
Dyck graph & $8$ & $4$ & $4$ & $4$ \\
F26A Graph & $8$ & $3$ & $3$ & $3$ \\
Flower Snark & $14$ & $2$ & $2$ & $2$ \\
Folkman Graph & $5$ & $2$ & $2$ & $2$ \\
Foster Graph & $42$ & $15$ & $15$ & $15$ \\
Foster graph for 3.Sym(6) graph & $13$ & $3$ & $3$ & $3$ \\
Franklin graph & $4$ & $1$ & $1$ & $1$ \\
Gray graph & $20$ & $9$ & $9$ & $9$ \\
Harries Graph & $55$ & $10$ & $10$ & $10$ \\
Harries-Wong graph & $55$ & $10$ & $10$ & $9$ \\
Heawood graph & $1$ & $1$ & $1$ & $1$ \\
Hexahedron & $1$ & $1$ & $1$ & $1$ \\
Hoffman Graph & $5$ & $2$ & $2$ & $2$ \\
Holt graph & $9$ & $2$ & $1$ & $1$ \\
Icosahedron & $1$ & $1$ & $1$ & $1$ \\
Klein 3-regular Graph & $33$ & $7$ & $7$ & $7$ \\
Klein 7-regular Graph & $1$ & $1$ & $1$ & $1$ \\
Ljubljana graph & $70$ & $18$ & $18$ & $17$ \\
McGee graph & $12$ & $3$ & $2$ & $2$ \\
Moebius-Kantor Graph & $6$ & $2$ & $2$ & $2$ \\
Nauru Graph & $9$ & $4$ & $4$ & $4$ \\
Pappus Graph & $7$ & $3$ & $3$ & $3$ \\
Perkel Graph & $1$ & $1$ & $1$ & $1$ \\
Robertson Graph & $13$ & $1$ & $1$ & $1$ \\
Sylvester Graph & $1$ & $1$ & $1$ & $1$ \\
Szekeres Snark Graph & $29$ & $7$ & $6$ & $6$ \\
Truncated Tetrahedron & $3$ & $1$ & $1$ & $1$ \\
Tutte-Coxeter graph & $10$ & $5$ & $5$ & $5$ \\
Twinplex Graph & $6$ & $1$ & $1$ & $1$ \\
Wagner Graph & $2$ & $1$ & $1$ & $1$ \\
Wells graph & $9$ & $2$ & $2$ & $2$ \\
\hline
\end{tabular}
\caption{The two new algebraic bounds in comparison with the Lov\'asz theta number for Sagemath named graphs when $k=3$.}
 	\label{fig:alsoLovaszthetak=3}
\end{table}
}}

{\small{
\begin{table}[!ht]
\centering
\renewcommand{\arraystretch}{1.15}
\begin{tabular}{|l|c c c|c|}
\hline
Graph & Rank-type bound & Ratio-type bound & Lov\'asz theta number & $\alpha_4(G)$ \\
\hline
Balaban 10-cage & $38$ & $7$ & $5$ & $5$ \\
Balaban 11-cage & $68$ & $10$ & $9$ & $9$ \\
Biggs-Smith graph & $35$ & $7$ & $7$ & $5$ \\
Bucky Ball & $36$ & $6$ & $6$ & $6$ \\
Conway-Smith graph for 3S7 & $1$ & $1$ & $1$ & $1$ \\
Coxeter Graph & $1$ & $1$ & $1$ & $1$ \\
Desargues Graph & $2$ & $2$ & $2$ & $2$ \\
Dejter Graph & $30$ & $4$ & $3$ & $2$ \\
Dodecahedron & $4$ & $2$ & $2$ & $2$ \\
Double star snark & $14$ & $2$ & $1$ & $1$ \\
Dyck graph & $2$ & $2$ & $2$ & $2$ \\
F26A Graph & $2$ & $2$ & $2$ & $2$ \\
Folkman Graph & $1$ & $1$ & $1$ & $1$ \\
Foster Graph & $32$ & $9$ & $9$ & $5$ \\
Foster graph for 3.Sym(6) & $1$ & $1$ & $1$ & $1$ \\
Franklin graph & $2$ & $1$ & $1$ & $1$ \\
Gray graph & $8$ & $3$ & $3$ & $3$ \\
Harries Graph & $50$ & $6$ & $5$ & $5$ \\
Harries-Wong graph & $50$ & $6$ & $5$ & $5$ \\
Hoffman Graph & $1$ & $1$ & $1$ & $1$ \\
Holt graph & $5$ & $1$ & $1$ & $1$ \\
Klein 3-regular Graph & $26$ & $4$ & $4$ & $4$ \\
Ljubljana graph & $56$ & $10$ & $10$ & $8$ \\
McGee graph & $9$ & $1$ & $1$ & $1$ \\
Moebius-Kantor Graph & $2$ & $1$ & $1$ & $1$ \\
Nauru Graph & $5$ & $2$ & $1$ & $1$ \\
Pappus Graph & $1$ & $1$ & $1$ & $1$ \\
Truncated Tetrahedron & $1$ & $1$ & $1$ & $1$ \\
Tutte 12-Cage & $23$ & $9$ & $9$ & $9$ \\
Tutte-Coxeter graph & $1$ & $1$ & $1$ & $1$ \\
Wagner Graph & $1$ & $1$ & $1$ & $1$ \\
Wells graph & $1$ & $1$ & $1$ & $1$ \\
\hline
\end{tabular}
\caption{The two new algebraic bounds in comparison with the Lov\'asz theta number for Sagemath named graphs when $k=4$.}
 	\label{fig:alsoLovaszthetak=4}
\end{table}
}}

{\small{
\begin{table}[!ht]
\centering
\renewcommand{\arraystretch}{1.15}
\begin{tabular}{|l|c c c|c|}
\hline
Graph & Rank-type bound & Ratio-type bound & Lov\'asz theta number & $\alpha_5(G)$ \\
\hline
Balaban 10-cage & $30$ & $5$ & $5$ & $5$ \\
Balaban 11-cage & $60$ & $7$ & $5$ & $4$ \\
Biggs-Smith graph & $19$ & $4$ & $4$ & $4$ \\
Bucky Ball & $31$ & $4$ & $3$ & $3$ \\
Desargues Graph & $1$ & $1$ & $1$ & $1$ \\
Dejter Graph & $23$ & $2$ & $2$ & $2$ \\
Dodecahedron & $1$ & $1$ & $1$ & $1$ \\
Double star snark & $10$ & $1$ & $1$ & $1$ \\
Dyck graph & $1$ & $1$ & $1$ & $1$ \\
F26A Graph & $1$ & $1$ & $1$ & $1$ \\
Foster Graph & $23$ & $4$ & $4$ & $3$ \\
Franklin graph & $1$ & $1$ & $1$ & $1$ \\
Gray graph & $7$ & $3$ & $3$ & $3$ \\
Harries Graph & $45$ & $5$ & $5$ & $5$ \\
Harries-Wong graph & $45$ & $5$ & $5$ & $5$ \\
Holt graph & $1$ & $1$ & $1$ & $1$ \\
Klein 3-regular Graph & $19$ & $2$ & $2$ & $2$ \\
Ljubljana graph & $49$ & $8$ & $8$ & $8$ \\
McGee graph & $6$ & $1$ & $1$ & $1$ \\
Moebius-Kantor Graph & $1$ & $1$ & $1$ & $1$ \\
Nauru Graph & $4$ & $1$ & $1$ & $1$ \\
Tutte 12-Cage & $22$ & $9$ & $9$ & $9$ \\
\hline
\end{tabular}
\caption{The two new algebraic bounds in comparison with the Lov\'asz theta number for Sagemath named graphs when $k=5$.}
 	\label{fig:alsoLovaszthetak=5}
\end{table}
}}


\begin{table}[!ht]
    \centering
    \small
    \begin{tabular}{|c|ccc|c|}
    \hline
    Cycle length & Rank-type bound & Ratio-type bound & Lov\'asz theta number & $\alpha_4(G)$ \\
    \hline
    $8$ & $1$ & $1$ & $1$ & $1$  \\
    $9$ & $1$ & $1$ & $1$ & $1$  \\
    $10$ & $2$ & $2$ & $2$ & $2$  \\
    $11$ & $3$ & $2$ & $2$ & $2$  \\
    $12$ & $4$ & $2$ & $2$ & $2$  \\
    $13$ & $5$ & $2$ & $2$ & $2$  \\
    $14$ & $6$ & $2$ & $2$ & $2$  \\
    $15$ & $7$ & $3$ & $3$ & $3$  \\
    $16$ & $8$ & $3$ & $3$ & $3$  \\
    $17$ & $9$ & $3$ & $3$ & $3$  \\
    $18$ & $10$ & $3$ & $3$ & $3$  \\
    $19$ & $11$ & $3$ & $3$ & $3$  \\
    \hline
    \end{tabular}
    \caption{The two new algebraic bounds for powers of cycles compared with the Lov\'asz
theta number when $k = 4$.}
    \label{table:cyclepowersranktypeandratiotypeboundsk=4}
    \end{table}


 
\begin{table}[!ht]
    \centering
    \small
    \begin{tabular}{|c|ccc|c|}
    \hline
    Cycle length & Rank-type bound & Ratio-type bound & Lov\'asz theta number & $\alpha_5(G)$ \\
    \hline
    $10$ & $1$ & $1$ & $1$ & $1$  \\
    $11$ & $1$ & $1$ & $1$ & $1$  \\
    $12$ & $3$ & $2$ & $2$ & $2$  \\
    $13$ & $3$ & $2$ & $2$ & $2$  \\
    $14$ & $4$ & $2$ & $2$ & $2$  \\
    $15$ & $5$ & $2$ & $2$ & $2$  \\
    $16$ & $6$ & $2$ & $2$ & $2$  \\
    $17$ & $7$ & $2$ & $2$ & $2$  \\
    $18$ & $8$ & $3$ & $3$ & $3$  \\
    $19$ & $9$ & $3$ & $3$ & $3$  \\
    \hline
    \end{tabular}
    \caption{The two new algebraic bounds for powers of cycles compared with the Lov\'asz
theta number when $k = 5$.}
\label{table:cyclepowersranktypeandratiotypeboundsk=5}
    \end{table}

\end{document}